\documentclass[12pt]{amsart}
\usepackage{amsthm,amsmath,amssymb,amsfonts}

\newcommand{\ga}{\gamma}
\newcommand{\de}{\delta}
\newcommand{\ka}{\kappa}
\newcommand{\Ga}{\Gamma}
\newcommand{\De}{\Delta}
\newcommand{\Th}{\Theta}
\newcommand{\Om}{\Omega}

\newcommand{\Ag}{\mathcal{A}_{g}}
\newcommand{\M}{\mathcal{M}}
\newcommand{\Mg}{\M_g}
\newcommand{\Mgn}{\M_{g,n}}
\newcommand{\Mgg}{\M_{g,2g+3}}
\newcommand{\Mct}{\M^{ct}}
\newcommand{\Mrt}{\M^{rt}}
\newcommand{\Mgnct}{\M^{ct}_{g,n}}
\newcommand{\Mgnrt}{\M^{rt}_{g,n}}
\newcommand{\Mggrt}{\M^{rt}_{g,2g+3}}

\newcommand{\oM}{\overline{\mathcal{M}}}
\newcommand{\oMgn}{\overline{\mathcal{M}}_{g,n}}

\newcommand{\calO}{\mathcal{O}}
\newcommand{\Xg}{\mathcal{X}_g}
\newcommand{\Zg}{\mathcal{Z}_{g}}
\newcommand{\ZgA}{\mathcal{Z}_{g,A}}
\newcommand{\ZZ}{\mathbb{Z}}
\newcommand{\QQ}{\mathbb{Q}}
\newcommand{\PP}{\mathbb{P}}
\DeclareMathOperator{\Jac}{Jac}
\DeclareMathOperator{\Aut}{Aut}

\theoremstyle{plain}
\newtheorem{theorem}{Theorem}
\newtheorem{lemma}[theorem]{Lemma}

\newtheorem{corollary}[theorem]{Corollary}

\newtheorem{claim}{Claim}

\theoremstyle{definition}

\newtheorem{remark}[theorem]{Remark}

\begin{document}
\title[Powers of the theta divisor and vanishing]{Powers of the theta divisor and relations in the tautological ring}
\author[E.~Clader]{Emily Clader}
\address{Mathematics Department, San Francisco State University, San Francisco, CA 94132-1722, USA.}
\email{eclader@sfsu.edu}
\thanks{The first author acknowledges the generous support of Dr.~Max R\"ossler, the Walter Haefner Foundation, and the ETH Foundation.  Research of the second author is supported in part by the National Science Foundation under the grants DMS-12-01369 and DMS-15-01265, and by a Simons Fellowship in Mathematics (Simons Foundation grant \#341858 to Samuel Grushevsky)}
\author[S.~Grushevsky]{Samuel Grushevsky}
\address{Mathematics Department, Stony Brook University, Stony Brook, NY 11794-3651, USA.}
\email{sam@math.stonybrook.edu}
\author[F.~Janda]{Felix Janda}
\address{Mathematics Department, University of Michigan, Ann Arbor, MI 48109-1043, USA.}
\email{janda@umich.edu}
\author[D.~Zakharov]{Dmitry Zakharov}
\address{Courant Institute of Mathematical Sciences, New York University, New York, NY 10012-1185, USA.}
\email{dvzakharov@gmail.com}

\begin{abstract}

We show that the vanishing of the $(g+1)$-st power of the theta divisor in the cohomology and Chow rings of the universal abelian variety implies, by pulling back along a collection of Abel--Jacobi maps, the vanishing results in the tautological ring of $\Mgn$ of Looijenga, Ionel, Graber--Vakil, and Faber--Pandharipande. We also show that Pixton's double ramification cycle relations, which generalize the theta vanishing relations and were recently proved by the first and third authors, imply Theorem~$\star$ of Graber and Vakil. Moreover, our proof provides an algorithm for expressing any tautological class on $\oMgn$ of sufficiently high codimension as a tautological class supported on the boundary.

\end{abstract}

\maketitle

\section{Introduction}
The tautological ring $R^*(\Mg)$ is the subring, of either the cohomology or the Chow ring of $\Mg$, generated by the Mumford--Morita--Miller $\kappa$-classes \cite{1983Mumford,1984Morita,1986Miller} defined by
\begin{equation*}
  \ka_i=\pi_*(\psi^{i+1}),
\end{equation*}
where $\pi\colon \M_{g,1} \rightarrow \M_g$ and $\psi = c_1(\omega_{\pi})$.  Faber and Pandharipande \cite{2005FaberPandharipande} gave an elegant extension of this definition to the Deligne--Mumford compactification: the rings $R^*(\oM_{g,n})$ are the smallest system of $\QQ$-subalgebras (either of the cohomology or the Chow ring of $\oM_{g,n}$) closed under pushforward by the gluing morphisms
\[\oM_{g_1, n_1+1} \times \oM_{g_2, n_2+1} \rightarrow \oM_{g_1+g_2, n_1+n_2},\]
\[\oM_{g,n+2} \rightarrow \oM_{g+1,n}\]
and the forgetful morphisms
\[\oM_{g,n+1} \rightarrow \oM_{g,n}.\]
Restricting $R^*(\oM_g)$ to $\M_g$ recovers the $\kappa$-ring $R^*(\M_g)$, while more generally, the restriction of the tautological ring to the open moduli space $\M_{g,n}$ is generated by the $\kappa$-classes together with the $\psi$-classes,
\[\psi_i = c_1(s_i^*\omega_{\pi}),\]
where $\pi\colon \M_{g,n+1} \rightarrow \M_{g,n}$ denotes the forgetful map, and $s_i$ is the section of $\pi$ determined by the $i$-th marked point.

Faber initiated an extensive study of $R^*(\M_g)$ in \cite{1999Faber}, calculating relations among tautological classes by using the Porteous formula and the methods and results of Mumford \cite{1983Mumford}.  Based on his observations, he formulated a striking series of conjectures suggesting that the tautological ring possesses a rich and remarkably well-behaved structure.  In particular, he proposed that $R^*(\M_g)$ vanishes in sufficiently high codimension, a result that was proven by Looijenga and Ionel:
\begin{theorem}[Looijenga \cite{1995Looijenga}, Ionel \cite{2002Ionel}] The tautological ring of $\Mgn$ vanishes in degrees greater than or equal to $g$, as well as in degree $g-1$ when $n=0$.
\label{thm:Fabervanishing}
\end{theorem}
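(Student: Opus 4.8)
The plan is to derive the vanishing from the single geometric input highlighted in the abstract: the identity $\Theta^{g+1}=0$ on the universal abelian variety $\Xg$, where $\Theta$ denotes the universal principal polarization normalized to restrict trivially to the zero section. I would first recall why this should hold: on each fiber the polarization satisfies $\Theta^{g}=g!\,[\mathrm{pt}]$ and $\Theta^{g+1}=0$ for dimension reasons, and the normalization forces $\Theta\cdot[\text{zero section}]=0$, so that globally $\Theta^{g+1}=\Theta\cdot\Theta^{g}$ is a multiple of $\Theta\cdot[\text{zero section}]$ together with correction terms that also vanish. Taking this as given, the strategy is to transport the relation into $R^*(\Mgn)$ through Abel--Jacobi maps and then harvest enough consequences to exhaust everything in codimension $\ge g$.

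Concretely, for a weight vector $A=(a_1,\dots,a_n)\in\ZZ^n$ with $\sum_i a_i=0$ I would use the Abel--Jacobi map $\phi_A\colon \Mgn\to \Xg$ sending a pointed curve to the class of the degree-zero divisor $\sum_i a_i p_i$ in its Jacobian, composed with the Torelli embedding of the base. Pulling back the vanishing relation gives $(\phi_A^*\Theta)^{g+1}=0$ in $R^{g+1}(\Mgn)$. The next step is to compute the divisor class $\eta_A:=\phi_A^*\Theta\in R^1(\Mgn)$ explicitly; because $\Theta$ defines a quadratic function under the group law, $\eta_A$ is a quadratic form in the entries of $A$ with tautological coefficients, and on the open moduli space the cross terms coming from collisions of the sections $p_i$ drop out, leaving a combination of the $\psi_i$ together with a Hodge/$\kappa_1$ contribution. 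I would carry out this computation via Grothendieck--Riemann--Roch applied to the Poincar\'e bundle.

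With $\eta_A$ in hand, the idea is to treat $\eta_A^{g+1}=0$ as a polynomial identity in the formal variables $a_1,\dots,a_n$: each coefficient is then a tautological relation in $R^{g+1}(\Mgn)$, and letting $n$ grow produces relations involving arbitrarily high monomials in the $\psi_i$. To reach codimension $g$ rather than $g+1$, and to convert $\psi$-monomials into the $\kappa$-classes that (together with the $\psi$'s) generate the ring, I would push these relations forward along the forgetful morphisms $\pi\colon \M_{g,n+1}\to\Mgn$, using $\pi_*\psi_{n+1}^{a+1}=\kappa_a$ and the fact that $\pi_*$ lowers codimension by one. Iterating this bookkeeping should show that every degree-$\ge g$ monomial in the generators of $R^*(\Mgn)$ lies in the span of the extracted relations, and a final pushforward down to $\Mg$ should yield the sharper vanishing in codimension $g-1$ when $n=0$.

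The main obstacle is precisely this last combinatorial harvesting step: the raw relations live in codimension $g+1$ and a priori only constrain powers of a single divisor class, so the crux is to prove that, after expanding in $A$ and applying forgetful pushforwards, the resulting family of relations is rich enough to annihilate the \emph{entire} tautological ring in codimension $\ge g$ (and $\ge g-1$ for $n=0$), with no surviving classes. Establishing this requires a careful generation argument matching the module structure of $R^*(\Mgn)$ against the explicit shape of $\eta_A$; the Grothendieck--Riemann--Roch computation of $\eta_A$ is a secondary technical hurdle, since any error in the $\kappa_1$ or Hodge contributions would corrupt the extraction.
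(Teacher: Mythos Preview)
Your outline is essentially the paper's strategy: pull back $\Theta^{g+1}=0$ along Abel--Jacobi maps, extract coefficients in the $a_i$, and push forward along forgetful maps to trade $\psi$-powers for $\kappa$-classes. Two points, however, deserve correction.

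First, on the open locus $\Mgn$ the divisor $\eta_A=s_A^*\Theta$ is exactly $\tfrac{1}{2}\sum_i a_i^2\psi_i$, with \emph{no} $\kappa_1$ or Hodge term. This is because the paper (and Hain) use the universal theta divisor \emph{trivialized along the zero section}; the $\lambda_1$ contribution you anticipate is precisely what this normalization removes. No Grothendieck--Riemann--Roch computation is needed --- the formula \eqref{eq:Theta} is already available --- and the absence of any term independent of the $a_i$ is in fact what makes the coefficient-extraction argument clean.

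Second, and more seriously, the pushforward step hides a properness problem that your proposal does not address. The forgetful morphism $\M_{g,n+1}\to\Mgn$ on the \emph{open} moduli space is not proper, so you cannot simply push forward the relation $\eta_A^{g+1}=0$ obtained on $\M_{g,2g+3}$. The paper's fix is to work on the rational-tails compactification $\Mggrt$, where the forgetful map \emph{is} proper, but then the pullback $s_A^*\Theta$ acquires the boundary terms $a_P^2\delta_0^P$ from \eqref{eq:Theta}. The heart of the argument (Claims~\ref{claim1}--\ref{claim4}) is to multiply by a suitable monomial $\Psi_C=\psi_{n+1}\cdots\psi_{2g+2}\cdot(\text{extra})$, and then show --- via Lemma~\ref{le:vanish} and a case analysis of which $\delta_0^I$ survive --- that in every $a$-coefficient divisible by $a_1\cdots a_n$, the boundary contributions land in strata that remain singular after forgetting the extra points, hence vanish upon restriction to $\Mgn$. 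This bookkeeping, together with the descending induction on the power of $\psi_{2g+3}$ already visible in Lemma~\ref{le:psig+1}, is the ``combinatorial harvesting'' you flag as the main obstacle; it is not automatic, and the choice $n=2g+3$ is made precisely so that the $a$-polynomial has enough room to isolate each $\psi$-monomial.
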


A key geometric insight into these and other properties of the tautological ring was provided by Graber and Vakil in \cite{2005GraberVakil}, in which Theorem~\ref{thm:Fabervanishing} was shown to be a consequence of the following result:

\begin{theorem}[Theorem~$\star$ of \cite{2005GraberVakil}] Any tautological class on $\oMgn$ of codimension $k$ can be represented by a class supported on the locus of curves having at least $k-g+1$ rational components.
\label{thm:star}
\end{theorem}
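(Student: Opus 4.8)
The plan is to work with the standard additive generators of the tautological ring. By Faber--Pandharipande, $R^*(\oMgn)$ is spanned by \emph{decorated strata} $[\Gamma,\alpha]$, namely pushforwards $\xi_{\Gamma*}(\alpha)$ under the gluing map $\xi_\Gamma\colon \prod_v \overline{\mathcal{M}}_{g_v,n_v}\to\oMgn$ associated to a stable graph $\Gamma$, where $\alpha=\prod_v \alpha_v$ is a monomial in $\psi$- and $\kappa$-classes of degree $d_v$ on each vertex $v$. It therefore suffices to rewrite each such class, of codimension $k=E(\Gamma)+\sum_v d_v$, as a sum of decorated strata each having at least $k-g+1$ genus-$0$ (rational) vertices. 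Writing $h^1(\Gamma)=E-V+1$ and using $g=\sum_v g_v+h^1(\Gamma)$, a short computation turns the identity $k-g+1=\sum_v d_v-\sum_v g_v+V$ into the clean statement that a single decorated stratum already satisfies the conclusion precisely when its decoration fits the \emph{budget}
\[ \sum_{v\,:\,g_v>0}(g_v-1)\ \ge\ \sum_v d_v. \]
I would then prove Theorem~\ref{thm:star} by induction on the genus $g$, and within fixed $g$ by induction on the total ``excess'' $\sum_v\max(d_v-(g_v-1),0)$, the goal of each step being to trade excess decoration on one vertex for boundary.

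The engine is a vertex-local clearing lemma: on a genus-$h$ component carrying a $\psi$--$\kappa$ monomial of degree $d>h$, that monomial can be written as a tautological class supported on the boundary. First I would reduce $\kappa$-classes to $\psi$-classes using $\ka_a=\pi_*(\psi^{a+1})$, so that a $\kappa$-monomial becomes a forgetful pushforward of a $\psi$-monomial from a moduli space with extra markings; this lets me assume $\alpha_v$ is a pure $\psi$-monomial. To clear such a monomial I would invoke Pixton's double ramification relations (proved by the first and third authors), which assert that for $d>h$ the codimension-$d$ part of Pixton's formula $P_h^d(A)$ vanishes on $\overline{\mathcal{M}}_{h,m}$ for every admissible vector $A=(a_1,\dots,a_m)$. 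In each such relation the trivial-graph term is an explicit polynomial in the $\psi_i$ (with coefficients built from the $a_i$), and all remaining terms are boundary; letting $A$ vary and taking suitable linear combinations, I would isolate any prescribed degree-$d$ $\psi$-monomial as the leading term, thereby expressing it modulo boundary. Genus-$0$ vertices require no relations at all: on $\overline{\mathcal{M}}_{0,m}$ every $\psi$- and $\kappa$-class is already a sum of boundary divisors, so any decoration there is cleared into additional rational components for free.

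Finally I would assemble these steps. Applying the clearing lemma at an over-budget vertex $v$ replaces $[\Gamma,\alpha]$ by a sum of decorated strata in which $v$ has been subdivided by new edges; each such term either has strictly more rational components, or distributes the genus $g_v$ among vertices of strictly smaller genus, or has strictly smaller excess, so the induction is well-founded. The one genuinely delicate case is the ``off by one'' between the relations, which only give vanishing in degree $d>h$, and the budget $h-1$: a decoration of degree exactly $g_v$ on a positive-genus vertex with no rational tails cannot be cleared by Pixton's relations directly. Here I would restrict to the interior and combine the identity for $k-g+1$ with the induction on genus, pushing the residual contribution off a boundary divisor $\overline{\mathcal{M}}_{g-1,n+2}\to\oMgn$ (or a reducible analogue) and applying Theorem~\ref{thm:star} in lower genus to the class being glued in. I expect the main obstacle to be precisely the spanning statement in the clearing lemma: showing that the trivial-graph leading terms of $P_h^d(A)$, as $A$ ranges over admissible vectors, generate \emph{all} degree-$d$ $\psi$-monomials modulo boundary and lower-order terms. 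This is an invertibility question for an explicit but intricate transition matrix, and controlling it --- while simultaneously ensuring the boundary output always strictly decreases the inductive statistic --- is the technical heart of the argument.
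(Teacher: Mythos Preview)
Your overall architecture matches the paper's exactly: decorated strata $\xi_{\Gamma*}(\gamma)$, the budget $d(\gamma_v)\le g(v)-1$ at positive-genus vertices, vertex-local clearing, and the final count $g_0\ge k-g+1$. The gap is precisely where you place it, and your proposed fix does not close it. A decoration of degree exactly $g_v$ on a positive-genus vertex is the obstruction; when that vertex is the unique genus-$g$ vertex of the trivial graph, the class in question is a degree-$g$ monomial supported on the interior $\Mgn$, and there is no boundary divisor to ``push the residual contribution off'' onto until one has already produced a boundary expression. Induction on genus helps only for vertices with $g_v<g$, so it cannot bootstrap the top vertex, and ``restricting to the interior'' discards exactly the boundary terms you need to keep.

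The paper's resolution of the off-by-one is not an induction on genus but a pushforward trick that manufactures degree-$g$ relations out of degree-$(g{+}1)$ ones. One works on $\Mggrt$ with $2g+3-n$ auxiliary markings, multiplies the relation $[s_A^*\Theta]^{g+1}=0$ by a monomial $\Psi_C$ divisible by $\psi_{n+1}\cdots\psi_{2g+2}$, and pushes forward along the proper map $\Pi_n\colon\Mggrt\to\Mgnrt$. The factor $\Psi_C$ kills, via Lemma~\ref{le:vanish}, all rational-tail divisors that would contract to the interior under $\Pi_n$; after isolating $a$-monomials divisible by $a_1\cdots a_n$ (Claim~\ref{claim1}), one obtains genuine degree-$g$ relations among $\psi$- and $\kappa$-monomials on $\Mgn$, with all correction terms already on the boundary of $\Mgnrt$. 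The ``invertibility of the transition matrix'' you flag is then handled by the descending induction of Claims~\ref{claim2}--\ref{claim4} in the proof of Theorem~\ref{thm:mainvanishing}, but it is inseparable from this pushforward step: the spanning of degree-$g$ monomials and the degree drop from $g{+}1$ to $g$ are achieved simultaneously, not sequentially.
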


More generally, Theorem~$\star$ also implies several other properties of the tautological rings, including the analogous vanishing statements to Theorem~\ref{thm:Fabervanishing} for curves with rational tails, or for curves of compact type, and the socle statements for these moduli spaces (that is, the one-dimensionality of the tautological ring in the smallest codimension where it is nontrivial); see \cite[Section 5]{2005GraberVakil}.

A stronger form of Theorem~$\star$ was proved by Faber and Pandharipande:

\begin{theorem}[\cite{2005FaberPandharipande}]
\label{thm:strongstar}
There exists an expression for any codimension $k$ tautological class in terms of tautological classes supported on curves with at least $k-g+1$ rational components.
\end{theorem}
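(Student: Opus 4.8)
The plan is to realize the representability of Theorem~\ref{thm:star} by an explicit rewriting procedure on an additive generating set of the tautological ring. Recall that $R^*(\oMgn)$ is additively generated by the \emph{decorated strata} $\xi_{\Gamma*}(\theta)$, in which $\Gamma$ runs over stable graphs of genus $g$ with $n$ legs, $\xi_\Gamma\colon\prod_{v}\oM_{g_v,n_v}\to\oMgn$ is the gluing morphism attached to $\Gamma$, and $\theta=\prod_v\theta_v$ is a product of monomials in the $\psi$- and $\ka$-classes on the vertices. Using $\ka_i=\pi_*(\psi^{i+1})$ and its relative analogues, I would first rewrite every $\ka$-factor as a pushforward of a $\psi$-monomial under a partial forgetful map, so that each generator is encoded by a pair $(\Gamma,\theta)$ with $\theta_v$ a $\psi$-monomial of degree $d_v$. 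Such a generator has codimension $k=|E(\Gamma)|+\sum_v d_v$, and its genus satisfies $g=\sum_v g_v+(|E(\Gamma)|-|V(\Gamma)|+1)$.

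These two identities reduce the whole problem to combinatorics of the pair $(\Gamma,\theta)$. Writing $r(\Gamma)$ for the number of genus-$0$ vertices, a direct substitution shows that the support condition $r(\Gamma)\ge k-g+1$ is equivalent to
\[
  \sum_{v:\,g_v>0}\bigl(d_v-g_v+1\bigr)+\sum_{v:\,g_v=0}d_v\le 0.
\]
In particular it suffices to express an arbitrary generator as a combination of generators in \emph{reduced form}, meaning that every genus-$0$ vertex is undecorated and every positive-genus vertex $v$ carries decoration of degree $d_v\le g_v-1$: each summand on the left is then non-positive and the support condition holds automatically. (The sharper degree-$(g-1)$ vanishing when $n=0$ is exactly what is needed so that the undecorated smooth curve, the trivial graph with one genus-$g$ vertex, already satisfies the bound.)

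The engine driving a generator toward reduced form is an \emph{effective} version of Theorem~\ref{thm:Fabervanishing}: on each $\Mgn$ a $\psi$--$\ka$ monomial of degree $\ge g$ (resp.\ $\ge g-1$ when $n=0$) does not merely vanish but can be written explicitly as a class supported on the boundary, i.e.\ as a combination of decorated strata whose graphs have at least one edge. Taking such explicit boundary expressions as input, I would process a generator $(\Gamma,\theta)$ that is not yet reduced by selecting a vertex $v$ whose decoration violates the bound, substituting the explicit boundary expression for $\theta_v$ on $\oM_{g_v,n_v}$, and pushing forward along $\xi_\Gamma$; this replaces $v$ by a subgraph with at least one new edge, yielding a combination of deeper decorated strata of the same codimension $k$. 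Since the total decoration degree on the resulting pieces drops by the number of new edges while their genera sum to at most $g_v$, the integer
\[
  \mu(\Gamma,\theta)=\sum_{v\text{ violating}}\bigl(g_v+d_v\bigr)
\]
strictly decreases at each step; as $\mu\ge 0$, the procedure terminates in reduced form, the genus-$0$ decorations being cleared using the fact that every positive-degree tautological class on $\oM_{0,m}$ is an explicit boundary class.

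The main obstacle is not the termination above, which the measure $\mu$ controls, but the production of the effective vanishing that the engine consumes: Theorem~\ref{thm:Fabervanishing} as stated asserts only that high-degree monomials vanish, whereas Theorem~\ref{thm:strongstar} requires an explicit boundary formula for each of them. Supplying these explicit relations is the genuine content, and it is here that one must invoke a concrete source of tautological relations---Faber and Pandharipande obtain them through the geometry of relative stable maps, while the relations coming from powers of the theta divisor and from Pixton's double ramification cycles provide the alternative, explicit input exploited in the present paper. Once such effective relations are fixed, the bookkeeping and the descent governed by $\mu$ assemble them into an explicit expression for any codimension-$k$ class in terms of classes supported on curves with at least $k-g+1$ rational components, simultaneously refining Theorem~\ref{thm:star} to its effective form.
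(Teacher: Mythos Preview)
Your proposal is correct and follows essentially the same route as the paper's proof of Theorem~\ref{thm:mainthm} (which yields Theorem~\ref{thm:strongstar}): reduce to decorated strata, apply the effective boundary expressions vertex-by-vertex (Corollary~\ref{cor:boundary}) whenever $d_v\ge g_v$, and then verify by the genus count that a generator in reduced form has at least $k-g+1$ genus-zero vertices. The only inessential differences are that the paper works directly with $\psi$--$\kappa$ monomials at the vertices rather than first trading $\kappa$'s for extra $\psi$'s, and it organizes termination as a double induction on genus and codimension (see the Remark following the proof) rather than via your decreasing measure~$\mu$; your parenthetical about the $n=0$ case is slightly off---the undecorated trivial graph already satisfies the bound for trivial reasons, and the sharper degree-$(g-1)$ vanishing is only needed when one actually has to push a degree-$(g-1)$ class on $\M_g$ to the boundary, which your reduced-form condition does not require.
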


The point of this strengthening of the previous theorem is that the boundary expression is itself tautological. However, implementing the proof of~\cite{2005FaberPandharipande} as an algorithm to compute this expression explicitly seems to be computationally impractical.

Our first result is a new proof of Theorem~\ref{thm:Fabervanishing} from a family of tautological relations on $\M_{g,n}^{ct}$ that we call the $\Theta$-relations. These relations arise by pulling back the universal theta divisor on the universal abelian variety to $\M_{g,n}^{ct}$ under a suitable family of Abel--Jacobi maps, and observing that its $(g+1)$-st power vanishes in the Chow ring. These were discussed in~\cite{2013Hain,2014GrushevskyZakharovB}, and we give the details in Section~2. Thus, we prove:
\begin{theorem}
The $\Theta$-relations on $\M_{g,n}^{ct}$ imply Theorem~\ref{thm:Fabervanishing} in the Chow ring of $\Mgn$.
\label{thm:mainvanishing}
\end{theorem}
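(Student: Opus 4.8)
The plan is to make the relations $\phi_A^*\Theta^{g+1}=0$ explicit and to extract from them precisely the vanishing asserted in Theorem~\ref{thm:Fabervanishing}. By the computation carried out in Section~2 (Hain's formula \cite{2013Hain}, in the corrected form of \cite{2014GrushevskyZakharovB}), the Abel--Jacobi pullback of the theta divisor along a vector $A=(a_1,\dots,a_n)$ with $\sum_i a_i=0$ has the shape $\phi_A^*\Theta=\tfrac12\sum_i a_i^2\psi_i+(\text{classes supported on the boundary of }\Mgnct)$. Since $\phi_A^*\Theta$ is quadratic in the entries of $A$, the relation $\phi_A^*\Theta^{g+1}=0$ is a polynomial in $A$ of degree $2(g+1)$ with coefficients in $R^{g+1}(\Mgnct)$; as it holds for every integer vector on the hyperplane $\sum_i a_i=0$, it holds identically there, so I may treat $A$ as formal and read off the coefficient of any monomial in the $a_i$. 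Restricting to the open locus $\Mgn\subset\Mgnct$ annihilates every boundary term and yields
\[\bigl(\textstyle\sum_i a_i^2\psi_i\bigr)^{g+1}=0\qquad\text{in }R^{g+1}(\Mgn),\]
for every $A$ with $\sum_i a_i=0$: a whole family of relations among degree-$(g+1)$ monomials in the $\psi$-classes.

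To reach the sharp codimension bound and to bring in the $\kappa$-classes, I would adjoin auxiliary marked points and push forward. The relation $\phi_A^*\Theta^{g+1}=0$ holds \emph{exactly} on $\M^{ct}_{g,n+m}$, so its image under a forgetful map $\pi\colon\M^{ct}_{g,n+m}\to\Mgnct$ vanishes identically in $R^{g+1-m}(\Mgnct)$; only at the final restriction to $\Mgn$ do I discard boundary contributions. Because every term of $\phi_A^*\Theta^{g+1}$ carrying a boundary factor pushes forward to a boundary class and hence restricts to zero, the surviving relation on $\Mgn$ is obtained by applying the standard forgetful pushforward formulas to the main term $\bigl(\tfrac12\sum_{i=1}^{n+m} a_i^2\psi_i\bigr)^{g+1}$, in which $\pi_*(\psi_{n+j}^{a+1})=\kappa_a$ converts the auxiliary $\psi$-powers into $\kappa$-classes while $\psi_1,\dots,\psi_n$ survive. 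Each forgotten point lowers the codimension by one: forgetting a single point gives degree-$g$ relations on $\M_{g,n}$ for $n\ge 1$, while --- starting from $A=(a,-a)$ on $\M^{ct}_{g,2}$, whose main term is $\tfrac12 a^2(\psi_1+\psi_2)$ --- forgetting both points produces the degree-$(g-1)$ relations in $R^*(\Mg)$ responsible for the improved vanishing in the unpointed case. Multiplying the basic relations by arbitrary tautological classes supplies relations in every codimension $\ge g+1$.

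It then remains to show that the relations produced in this way span all of $R^{\ge g}(\Mgn)$, and $R^{\ge g-1}(\Mg)$ when $n=0$. I would argue by induction on $n$ and on the codimension, showing that every monomial in the $\psi$- and $\kappa$-classes of the relevant degree can be rewritten, modulo the extracted relations, as a class supported on the boundary of $\Mgnct$, which restricts to zero on $\Mgn$. This mirrors the logical architecture of \cite{2005GraberVakil}, where Theorem~\ref{thm:Fabervanishing} is deduced from Theorem~\ref{thm:star}; here the $\Theta$-relations supply the geometric input, and the aim is to recover Ionel's explicit vanishing statement rather than a weaker one.

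The main obstacle is exactly this last, combinatorial step. One must verify that the single family $\{(\sum_i a_i^2\psi_i)^{g+1}=0\}$, after coefficient extraction in $A$ and forgetful pushforward, has enough rank to eliminate every top-codimension monomial, and that the quadratic dependence on $A$ through the $a_i^2$ introduces no systematic deficiency in the span. The delicate points are the sharpness of the count --- obtaining codimension $g$ rather than $g+1$, and the further gain to $g-1$ when $n=0$, which must emerge from the symmetry of forgetting several points simultaneously --- together with the bookkeeping needed to confirm that the boundary terms carried through the pushforwards genuinely vanish upon restriction to $\Mgn$. Matching the resulting system of relations to Ionel's explicit form is where the substance of the argument lies.
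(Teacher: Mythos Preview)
Your overall architecture --- add auxiliary marked points, push the $\Theta$-relation forward along forgetful maps to lower the codimension, then restrict to the open locus --- matches the paper's. The gap is the assertion that ``every term of $\phi_A^*\Theta^{g+1}$ carrying a boundary factor pushes forward to a boundary class and hence restricts to zero on $\Mgn$.'' This is false, and repairing it is the technical heart of the paper's argument rather than mere bookkeeping.

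Concretely: on $\Mct_{g,n+m}$ the class $\phi_A^*\Theta$ contains the rational-tail divisors $\de_0^I$. When $\#(I\cap[n])\le 1$, forgetting the auxiliary points contracts that rational tail and lands in the \emph{open} locus $\Mgn$, not on the boundary. For example, $\de_0^{\{i,\,n+1\}}$ (one permanent and one auxiliary point on the tail) appears in $\phi_A^*\Theta$ with coefficient proportional to $(a_i+a_{n+1})^2$, and its pushforward under $\pi_{n+1}$ is the fundamental class of $\Mgnct$. So such terms contribute nontrivially after restriction to $\Mgn$, and the ``surviving relation'' is \emph{not} simply the pushforward of the main $\psi$-term. (Your $n=0$ sketch from $\Mct_{g,2}$ with $A=(a,-a)$ accidentally dodges this because $\de_0^{\{1,2\}}$ carries coefficient $(a-a)^2=0$; the problem reappears as soon as $n\ge 1$ or more auxiliary points are added.)

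The paper handles this by (i) passing to $\Mrt_{g,2g+3}$, (ii) multiplying the relation by a monomial $\Psi_C$ containing $\psi_i$ for each auxiliary index $n{+}1\le i\le 2g{+}2$ before pushing forward --- the identity $\de_0^I\,\psi_{i_1}\cdots\psi_{i_k}=0$ whenever at least $\#I-1$ of the $i_j$ lie in $I$ then kills most of the dangerous rational-tail terms --- and (iii) extracting only those $a$-monomial coefficients divisible by $a_1\cdots a_n$, which forces every surviving boundary contribution to be a multiple of some $\de_0^J$ with $\#(J\cap[n])\ge 2$; those genuinely push forward to boundary. Steps (ii) and (iii) together are what you dismiss as bookkeeping, but without them the argument does not close. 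The specific count of $2g+3$ points is what allows one to multiply by enough auxiliary $\psi$'s to kill the bad boundary while still landing, after pushforward, in codimension $\ge g$ (respectively $g-1$ for $n=0$) with enough relations to span.
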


The key advantage of this new proof of Theorem~\ref{thm:Fabervanishing} --- aside from the fact that it proceeds by an entirely elementary argument from the $\Theta$-relations --- is that it leads naturally to a constructive proof of Theorem~$\star$ (and hence also implies Theorem~\ref{thm:strongstar}).  Indeed, the $\Theta$-relations can be viewed as the restriction to $\M_{g,n}^{ct}$ of a family of relations called the double ramification cycle relations (see Theorem~\ref{thm:Pixton} below), first conjectured by Pixton and later proved by the first and third authors \cite{2016CladerJanda}.  These relations arise, as we discuss in Section \ref{DRrelations}, from a perspective on the theta divisor via the moduli space of relative stable maps to the projective line.  By carefully tracking the boundary contributions to the double ramification cycle relations, the vanishing in the proof of Theorem~\ref{thm:Fabervanishing} from the $\Theta$-relations is upgraded to an algorithm for computing tautological boundary expressions for tautological classes in degree at least $g$. Implementing this algorithm explicitly on a computer is an entirely tractable task that we intend to carry out in future work.

We summarize the preceding discussion as our main theorem:
\begin{theorem}
Pixton's double ramification cycle relations on $\oM_{g,n}$ imply Theorem~$\star$ (and its strengthening, Theorem~\ref{thm:strongstar}), and moreover yield an algorithm for expressing, in the Chow ring, any tautological class on $\oM_{g,n}$ of codimension at least $g$ as an explicit tautological class supported on the boundary.
\label{thm:mainthm}
\end{theorem}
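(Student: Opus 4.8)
The plan is to upgrade the elementary vanishing argument behind Theorem~\ref{thm:mainvanishing} into a terminating constructive procedure by retaining the boundary terms of the full double ramification cycle relations on $\oMgn$, rather than discarding them as one does when restricting to compact type. I begin by passing to a normal form. Every tautological class is a $\QQ$-linear combination of \emph{decorated strata} $[\Gamma,\alpha]$, each the pushforward along a gluing map of a monomial $\alpha$ in $\psi$- and $\ka$-classes on the vertices of a stable graph $\Gamma$. Writing $g_v$ and $\delta_v$ for the genus and the decoration degree at a vertex $v$, and noting that the rational components are the genus-$0$ vertices, a short computation with the identities $g = b_1(\Gamma) + \sum_v g_v$ and $k = |E(\Gamma)| + \sum_v \delta_v$ shows that $[\Gamma,\alpha]$ is supported on at least $k-g+1$ rational components exactly when
\[\sum_{v\,:\,g_v\ge 1}(g_v-1)\ \ge\ \sum_v \delta_v.\]
Thus Theorem~$\star$ becomes the assertion that every decorated stratum can be rewritten as a combination of decorated strata each satisfying this inequality, and the algorithm is the explicit rewriting procedure. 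By additivity and induction on $|E(\Gamma)|$, it suffices to work locally at a single vertex: to rewrite a degree-$d$ decoration with $d\ge g_v$ on a genus-$g_v$ vertex purely in terms of genuinely boundary contributions.

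The local engine is Pixton's family of relations $P_{g_v}^{d',A}=0$ (valid for $d'>g_v$), applied after gluing and pulling back to the vertex $v$, with a weight vector $A$ distributed over the markings and half-edges incident to $v$ subject to $\sum a_i=0$. The key structural feature, already used in proving Theorem~\ref{thm:mainvanishing}, is that $P_{g_v}^{d',A}$ is \emph{polynomial} in the entries of $A$; extracting the coefficient of a suitable monomial in the $a_i$ produces, for each degree, a relation whose interior (non-boundary) part is exactly the $\psi$-$\ka$ monomial one wishes to eliminate. Solving these relations expresses that monomial as a combination of (i) interior monomials of the same degree that are strictly lower in a fixed monomial order and are eliminated in turn, and (ii) the boundary terms of Pixton's formula, which upon being reglued into $\Gamma$ are supported on strata of strictly smaller complexity in the well-founded order introduced below. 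Decorations living on genus-$0$ vertices are disposed of separately, using the standard fact that $R^{>0}(\oM_{0,m})$ is generated by products of boundary divisors.

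To turn this into an algorithm I would equip the set of decorated strata with a well-founded order — built from the total excess $\sum_v \max(\delta_v-g_v+1,\,0)$ across positive-genus vertices, refined by edge count and by the monomial order on decorations — and verify that each application of the local step strictly decreases it. Because Pixton's polynomials are completely explicit and each step is a finite linear-algebra manipulation (coefficient extraction in $A$ followed by gluing pushforwards), the resulting procedure is genuinely computable and halts with a representative of the prescribed boundary form. Since every intermediate expression is tautological, this yields both Theorem~$\star$ and its strengthening, Theorem~\ref{thm:strongstar}.

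I expect the main obstacle to lie in the boundary bookkeeping of the local step: one must show that pushing Pixton's relation forward along the gluing map at $v$ reproduces only strata that are strictly smaller in the chosen order, with no \emph{resonant} term matching a decoration of equal or greater complexity. This requires simultaneously controlling the weight distribution across the half-edges of $\Gamma$ (so that the hypothesis $\sum a_i=0$ holds at each vertex after the weights are split along edges) and the degrees of the decorations that Pixton's formula reintroduces on the newly created vertices, in particular ruling out new excess on genus-$0$ vertices. A secondary difficulty is the non-degeneracy needed to solve for the target monomial — that the relevant coefficient in $A$ is nonzero and that the interior parts of the relations, ranging over $d'$ and $A$, span all monomials of degree at least $g$ — together with the codimension-$(g-1)$ refinement when $n=0$, which demands a slightly sharper count.
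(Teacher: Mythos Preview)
Your outline is essentially the paper's own argument: apply the upgraded vanishing step (Corollary~\ref{cor:boundary}) locally at each vertex of a decorated stratum, then use the dimension count $k=\#E+\sum_v d(\gamma_v)$, $g=h^1(\Gamma)+\sum_v g_v$ to conclude that once every vertex satisfies $d(\gamma_v)\le\max(g_v-1,0)$ the number of genus-zero vertices is at least $k-g+1$. Your termination order is a cleaner packaging of the paper's induction on genus and then on codimension (see the Remark following the proof).

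One point needs correction. You assert that $P_{g_v}^{d',A}$ is polynomial in $A$ and that this was ``already used in proving Theorem~\ref{thm:mainvanishing}.'' The $\Theta$-relations $[s_A^*\Theta]^{g+1}=0$ on $\Mgnct$ are visibly polynomial in $A$, and that is all Theorem~\ref{thm:mainvanishing} needs; but polynomiality of the full class $[\Omega_{g,A}]_d$ on $\oMgn$ is only conjectural (see the paragraph after~\eqref{eq:thetaRHS}). The paper works around this in Remark~\ref{rem:formulas}: the left side of~\eqref{eq:thetaRHS} \emph{is} polynomial, so its coefficients can be read off directly, while on the right-hand boundary class $D_{A,g+1}$ one applies finite difference operators in the $a_i$ that agree with coefficient extraction on polynomials. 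Your argument goes through with this substitution.

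Finally, you have the relative weights of the two difficulties reversed. The ``main obstacle'' you anticipate---boundary bookkeeping and possible resonance---is immediate: each local step replaces $\Gamma$ by graphs with strictly more edges and bounded vertex decoration, so the induction terminates without any subtle control on newly created genus-zero vertices. By contrast, your ``secondary difficulty'' (non-degeneracy: showing that the coefficient-extraction system actually solves for every $\psi$-$\kappa$ monomial of degree $\ge g$, including the degree-$(g-1)$ case when $n=0$) is the entire technical content of the local step; in the paper this is the chain of Claims~\ref{claim1}--\ref{claim4} inside the proof of Theorem~\ref{thm:mainvanishing}.
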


\smallskip
The paper is organized as follows.  We recall the definition of the $\Theta$-relations and Pixton's double ramification cycle relations in Section \ref{relations}, and in Section \ref{sec:main}, we give the proofs of Theorems~\ref{thm:mainvanishing} and \ref{thm:mainthm}.  We then exemplify our algorithm by using it to compute boundary expressions for $\psi_1$ and $\kappa_1$ on $\oM_{1,1}$ in Section \ref{examples}.  Throughout the paper, we work in the Chow ring with rational coefficients; all results also imply the analogous statements in the cohomology with rational coefficients, since, by definition, the cycle class map is surjective on the tautological ring.

\section{The $\Theta$-relations and Pixton's double ramification cycle relations}
\label{relations}

Fix a genus $g\geq 0$ and an integer $n>0$ such that $2g - 2 + n > 0$, and let
\begin{equation*}
  A:=(a_1,\ldots,a_n),\quad a_i\in \ZZ
\end{equation*}
be a vector satisfying the condition
\begin{equation*}
  \sum_{i=1}^n a_i=0.
\end{equation*}
Define the locus $\ZgA \subset \Mgn$ to consist of marked curves $(C,p_1,\ldots,p_n)$ satisfying
\begin{equation}
  \calO_C\left(\sum_{i=1}^n a_ip_i\right)\cong \calO_C.
\label{eq:DRopen}
\end{equation}
Then $\ZgA$ is of pure codimension $g$ in $\Mgn$. Eliashberg posed the question of computing the class $[\ZgA]$ of $\ZgA$, in the cohomology or the Chow ring of $\Mgn$, and of defining and computing an extension of $[\ZgA]$ to $\oMgn$.

\subsection{The $\Th$-relations and moduli of abelian varieties}
\label{subsec:Thetarelations}
One approach to Eliashberg's problem, introduced by Hain in \cite{2013Hain}, is to consider \eqref{eq:DRopen} as a relation on the Jacobian variety of the curve $C$, and to vary it in moduli. Let $\Ag$ denote the moduli space of principally polarized abelian varieties, and define the Abel--Jacobi map $s_A\colon\Mgn\to\Xg$ to the universal abelian variety $\Xg=\left\lbrace([A],z)\;|\;[A]\in \Ag, z\in A\right\rbrace$ by the formula
\begin{equation}\label{eq:AJ}
  s_A(C,p_1,\ldots,p_n):=(\Jac^0_C,\calO_C(a_1p_1+\ldots+a_np_n)).
\end{equation}
The locus $\ZgA$ is then easily seen to be the preimage of the zero section of $\Xg$ under $s_A$:
\begin{equation*}
  \ZgA:=s_A^{-1}\Zg,\quad {\rm where}\quad \Zg:=\left\lbrace([A],0)\;|\;[A]\in \Ag\right\rbrace\subset \Xg.
\end{equation*}

A stable curve is of compact type if and only if its Jacobian is an abelian variety, and the Abel--Jacobi map $s_A$ naturally extends to the moduli space of curves of compact type $\Mgnct\subset \oMgn$ (see formula (14) in \cite{2014GrushevskyZakharovA} or Section 0.2.3 in \cite{2016JandaPandharipandePixtonZvonkine}).  To define this extension, let $(C,p_1,\ldots,p_n)$ be a stable marked curve of compact type, let $\widetilde{C}$ be the normalization, let $\widetilde{C}=C_1\sqcup \ldots\sqcup C_N$ be the decomposition of $C$ into irreducible components, and for each $j$, let $q_{jk}\in C_j$ be the preimages of the nodes. There is a unique way to assign weights $a_{jk}$ to the nodes such that the weights at a pair of matching nodes sum to zero, and on each $C_j$, we have
\[\sum_{i \; : \;  p_i \in C_j} a_i + \sum_k a_{jk} = 0.\]
From here, since $\Jac (C)=\prod_{j=1}^N \Jac(C_j)$, one can define the extension $s_A\colon\Mgnct\to \Xg$ by the following formula:
\begin{equation}
s_A(C,p_1,\ldots,p_n)=\left(\Jac(C),\prod_{j=1}^N \calO_{C_j}\left(\sum_{i:p_i\in C_j}^n a_ip_i+\sum_k a_{jk}q_{jk}\right)\right).
\label{eq:AJcompact}
\end{equation}
The closure of $\ZgA$ in $\Mgnct$ is contained in $s_A^{-1}(\Zg)$, and  $s_A^{-1}(\Zg)\cap \Mgn=\ZgA$.  However, $s_A^{-1}(\Zg)$ has components of excessive dimension, coming from curves having a rational tail. To account for these, we consider not the preimage but the pullback
\begin{equation} \label{eq:Rct}
  R^{ct}_{g,A}:=s_A^*[\Zg],
\end{equation}
either in the cohomology ring or in the Chow ring of $\Mgnct$.

In \cite{2013Hain}, Hain computed the class $R^{ct}_{g,A}$ in cohomology, using Hodge-theoretic techniques. Hain's calculations were simplified and extended to the Chow ring by the second and fourth authors in \cite{2014GrushevskyZakharovB}. The main idea is to use the following theorem, which can be deduced from results of Deninger and Murre \cite[Cor.~2.22]{1991DeningerMurre} by applying the Fourier--Mukai transform to theta divisor; see~\cite[Exercise~13.2]{2012EdixhovenvanderGeerMoonen} or~\cite[Sec.~16.4, Exercise~16.8.1]{2004BirkenhakeLange} for further details.
\begin{theorem}\label{thm:thetact}
Let $\Th\in CH^1(\Xg)$ denote the universal symmetric theta divisor trivialized along the zero section. Then
\begin{enumerate}
\item $[\Zg]=\dfrac{\Th^g}{g!}$ in $CH^g(\Xg)$,
\item $\Th^{g+1}=0$ in $CH^{g+1}(\Xg).$
\end{enumerate}
\end{theorem}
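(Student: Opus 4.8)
The plan is to prove the two statements about the theta divisor $\Th$ on the universal abelian variety $\Xg$ using the theory of Fourier-Mukai transforms, which is the approach the excerpt explicitly points to via Deninger-Murre and the references to Edixhoven--van der Geer--Moonen and Birkenhake--Lange. The central tool is the Fourier-Mukai transform $\mathcal{F}$ on the Chow ring (with $\QQ$-coefficients) of an abelian variety (or a family thereof), which interchanges the intersection product and the Pontryagin product, and which acts on the class of a point and the class of the theta divisor in a controlled way. The key input from Deninger-Murre is the Beauville-type decomposition of the Chow ring into eigenspaces for the multiplication-by-$n$ maps, together with the fact that on a principally polarized abelian variety the theta divisor $\Th$ is (up to the trivialization along the zero section) its own Fourier dual in a suitable normalized sense.

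The first step is to set up the relative Fourier-Mukai transform for the universal family $\Xg \to \Ag$, using the relative Poincar\'e bundle associated to the principal polarization; here the principal polarization identifies $\Xg$ with its dual family, which is what makes $\Th$ self-dual and lets us avoid separate dual-abelian-variety bookkeeping. With this in place, I would compute $\mathcal{F}(e^{\Th})$, where $e^{\Th}=\sum_k \Th^k/k!$; the standard computation on a $g$-dimensional abelian variety gives that the Fourier transform of $e^{\Th}$ is (up to sign and the identification via the polarization) again of the form $\pm e^{-\Th}$, reflecting the fact that the theta divisor has self-intersection number $\Th^g/g! = [\text{point}]=[\Zg]$ by the Riemann-Roch/geometric Riemann singularity computation. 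Statement (1), $[\Zg]=\Th^g/g!$, is then essentially the classical fact that $\chi(\calO_A(\Th))=1$ together with the identification of the top self-intersection of the principal polarization with the class of the zero section; the content is to run this fiberwise computation in a way that is valid in the relative Chow ring of $\Xg$, which is exactly where Deninger--Murre's extension of the Beauville decomposition to the relative setting is needed.

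For statement (2), $\Th^{g+1}=0$, the cleanest route is again through the Fourier-Mukai transform: since $\Th$ is trivialized along the zero section, its transform $\mathcal{F}(\Th)$ lies in a prescribed piece of the Beauville grading, and comparing the grading of $\Th^{g+1}$ under $\mathcal{F}$ with the available components forces it to vanish. More concretely, one observes that $[\Zg]=\Th^g/g!$ from (1) is the class of the zero section, and the zero section is the Fourier transform of the fundamental class; intersecting $[\Zg]$ with $\Th$ one more time and applying that $\Th$ restricts to zero on the zero section (by the trivialization) yields $\Th^{g+1}=\Th \cdot g!\,[\Zg]=0$. This last step is the conceptual heart: the trivialization of $\Th$ along the zero section is precisely what makes the extra factor of $\Th$ kill the class $[\Zg]$ supported there.

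The main obstacle I anticipate is not the fiberwise abelian-variety computation, which is classical, but rather ensuring that these identities hold in the relative Chow ring $CH^*(\Xg)$ rather than merely fiberwise or in cohomology. The subtlety is that $\Xg$ is a family over the base $\Ag$, so one must control the non-fiberwise (horizontal) components of the classes and verify that the Fourier-Mukai formalism, including the Beauville decomposition and the self-duality coming from the principal polarization, genuinely extends relatively; this is exactly the content of the Deninger--Murre results being invoked, and the role of the trivialization along the zero section is to normalize $\Th$ so that it carries no spurious pullback-from-the-base component that would obstruct the vanishing. Once the relative Fourier-Mukai machinery is granted, both statements follow from degree and grading considerations with little additional computation.
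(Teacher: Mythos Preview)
The paper does not actually give a proof of this theorem; it simply records it as a known result, attributing it to Deninger--Murre's Corollary~2.22 together with the Fourier--Mukai computation for the theta divisor as in the cited exercises of Edixhoven--van der Geer--Moonen and Birkenhake--Lange (and to Hain in cohomology). Your proposal is precisely in line with those references: the relative Fourier--Mukai transform and Beauville decomposition from Deninger--Murre are exactly the inputs the paper points to for~(1).

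One remark worth making is that your argument for~(2) from~(1) is cleaner than invoking the full Fourier--Mukai grading machinery. Once $[\Zg]=\Th^g/g!$ is known, writing $\Th^{g+1}=g!\,\Th\cdot[\Zg]=g!\,s_*(s^*\Th)$ via the projection formula (where $s:\Ag\to\Xg$ is the zero section) and using $s^*\Th=0$ from the trivialization hypothesis kills it immediately. This is correct and makes transparent why the normalization of $\Th$ along the zero section is essential for~(2); without it there could be a nonzero class pulled back from $\Ag$. The references the paper cites obtain both statements simultaneously from the Fourier--Mukai identity $\mathcal{F}(e^{\Th})=(-1)^g e^{-\Th}$, which packages~(1) and~(2) together as the degree-$0$ and degree-$(-1)$ components; your separation of the two steps is a mild but genuine simplification for~(2).
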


To compute $R_{g,A}^{ct}$, it thus suffices to compute the pullback of the divisor $\Th$, which is a standard calculation using test curves. In addition, for every $A$ we get a relation $[s_A^*\Th]^{g+1}=0$  in $CH^{g+1}(\Mgnct)$. We summarize the results of \cite{2013Hain} and \cite{2014GrushevskyZakharovB} in the following theorem, in which $[n]$ denotes the set $\{1, \ldots, n\}$:

\begin{theorem}[$\Th$-relations] \label{thm:Theta} The pullback of $\Th$ to $\Mgnct$ along the Abel--Jacobi map \eqref{eq:AJcompact} is equal to
\begin{equation}\label{eq:Theta}
  s_A^*\Th=-\frac{1}{4}\sum_{h=0}^g \sum_{P\subset [n]}a_P^2 \delta_h^P,
\end{equation}
where $a_P=\sum_{i\in P}a_i$ for any $P\subset [n]$. Here, $\delta_h^P$ is the class of the closure of the locus of curves having an irreducible component of genus $h$ containing the marked points indexed by $P$ and an irreducible component of genus $g-h$ containing the remaining marked points. In the cases when the resulting curve is not stable, we set $\delta_0^{\{i\}}=\delta_g^{[n]\backslash\{i\}}=-\psi_i$ and $\delta_0^{\emptyset}=\delta_g^{[n]}=0$.

Formula \eqref{eq:Theta} and Theorem \ref{thm:thetact} imply that the pullback of the zero section is given by the following formula:
\begin{equation}\label{eq:Thetag}
R_{g,A}^{ct}=\frac{1}{g!}[s_A^*\Th]^{g}\in CH^{g}(\Mgnct),
\end{equation}
and that following relation holds:
\begin{equation}\label{eq:Thetag+1}
  [s_A^*\Th]^{g+1}=0\in  CH^{g+1}(\Mgnct).
\end{equation}
\end{theorem}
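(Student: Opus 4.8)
The two displayed consequences \eqref{eq:Thetag} and \eqref{eq:Thetag+1} follow formally from the divisor identity \eqref{eq:Theta} together with Theorem~\ref{thm:thetact}, so the plan is to concentrate on \eqref{eq:Theta}. Since $s_A^*\colon CH^*(\Xg)\to CH^*(\Mgnct)$ is a ring homomorphism, pulling back the two items of Theorem~\ref{thm:thetact} gives, using the definition \eqref{eq:Rct}, both $R_{g,A}^{ct}=s_A^*[\Zg]=s_A^*(\Th^g/g!)=(s_A^*\Th)^g/g!$ and $(s_A^*\Th)^{g+1}=s_A^*(\Th^{g+1})=0$. Thus the entire content of the theorem is the computation of the single divisor class $s_A^*\Th\in CH^1(\Mgnct)$.

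First I would record the dependence of this class on the vector $A$. Scaling $A\mapsto nA$ replaces each line bundle $\calO_C(\sum_i a_ip_i)$ by its $n$-th tensor power, so $s_{nA}=[n]\circ s_A$ for the fiberwise multiplication $[n]$ on $\Xg$; because $\Th$ is symmetric and trivialized along the zero section, the theorem of the cube gives $[n]^*\Th=n^2\Th$ exactly (the trivialization forces the a priori pullback from $\Ag$ to vanish), whence $s_{nA}^*\Th=n^2\,s_A^*\Th$. Combining this with the theorem of the square applied to the addition law $\Xg\times_{\Ag}\Xg\to\Xg$ shows that $A\mapsto s_A^*\Th$ is a $CH^1(\Mgnct)$-valued quadratic form in $a_1,\dots,a_n$; together with the $S_n$-equivariance of the construction, this already explains the shape $\sum_{h,P}(\text{const})\,a_P^2\,\delta_h^P$ of \eqref{eq:Theta} and reduces the problem to a bounded list of universal coefficients.

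To evaluate those coefficients I would compute $s_A^*\Th$ by Grothendieck--Riemann--Roch on the universal curve $\pi\colon\mathcal{C}\to\Mgnct$. Let $\sigma_i$ be the marked-point sections and let $\mathcal{L}_A=\calO(\sum_i a_i\sigma_i)$, corrected at the nodes by the balancing weights $a_{jk}$ so that it has degree zero on every component of every fiber; this is exactly the line bundle realizing the Abel--Jacobi section \eqref{eq:AJcompact}. The Riemann--Roch computation of $\det R\pi_*\mathcal{L}_A$ splits into a quadratic piece $\tfrac12\pi_*(c_1(\mathcal{L}_A)^2)$, an odd piece $-\tfrac12\pi_*(c_1(\mathcal{L}_A)\,c_1(\omega_\pi))$ that is linear in $A$, and an $A$-independent piece that is a multiple of $\lambda$. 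The symmetric normalization of $\Th$ removes the odd piece and the trivialization along the zero section removes the $\lambda$ piece, so that $s_A^*\Th=-\tfrac12\pi_*(c_1(\mathcal{L}_A)^2)$. Since $c_1(\mathcal{L}_A)$ is supported on the sections $\sigma_i$ and the node loci, this pushforward manifestly involves only $\psi$- and $\delta$-classes; on the interior $\pi_*(\sigma_i^2)=-\psi_i$ recovers the part $\tfrac12\sum_i a_i^2\psi_i$ of \eqref{eq:Theta}, in agreement with the degenerate conventions $\delta_0^{\{i\}}=\delta_g^{[n]\setminus\{i\}}=-\psi_i$. In the spirit of the references \cite{2013Hain,2014GrushevskyZakharovB}, one may alternatively fix each coefficient by intersecting against explicit test curves, using Poincar\'e's formula $[u(C)]=\Th^{g-1}/(g-1)!$ for the class of the Abel--Jacobi image of a fixed smooth curve.

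The main obstacle is the boundary contribution, that is, the coefficient $-\tfrac14 a_P^2$ of each separating divisor $\delta_h^P$. Over such a divisor the Jacobian degenerates to the product $\Jac(C_1)\times\Jac(C_2)$, on which $\Th$ becomes the sum $\Th_1\boxplus\Th_2$, and the section lands at the pair of line bundles twisted at the separating node by the weight $\pm a_P$ dictated by the balancing condition; the quadratic dependence on this node weight is what produces the factor $a_P^2$, while the factor $\tfrac14$ reflects the symmetric double-counting of each separating divisor as both $\delta_h^P$ and $\delta_{g-h}^{[n]\setminus P}$ with $a_{[n]\setminus P}=-a_P$. Concretely I expect the work to lie in correctly twisting $\mathcal{L}_A$ along the node loci so that it is fiberwise of degree zero, and in evaluating the resulting self-intersection terms $\pi_*((\text{node correction})^2)$ that contribute the boundary classes; managing these node corrections, together with the degenerate stability conventions $\delta_0^{\{i\}}=-\psi_i$ and $\delta_0^{\emptyset}=0$, is the delicate point of the calculation.
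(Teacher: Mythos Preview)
Your proposal is correct, and in fact it supplies more argument than the paper does: the paper does not give an independent proof of this theorem but presents it as a summary of the results of \cite{2013Hain} and \cite{2014GrushevskyZakharovB}, remarking only that computing $s_A^*\Th$ ``is a standard calculation using test curves.'' Your reduction of \eqref{eq:Thetag} and \eqref{eq:Thetag+1} to \eqref{eq:Theta} via Theorem~\ref{thm:thetact} is exactly what the paper indicates.

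As for the divisor formula itself, you outline two routes. The test-curve method you mention at the end of your third paragraph is what the paper alludes to and what \cite{2014GrushevskyZakharovB} carries out in Chow. Your primary route via GRR/Deligne pairing on the universal curve, yielding $s_A^*\Th=-\tfrac12\,\pi_*\big(c_1(\mathcal L_A)^2\big)$ after using the symmetric trivialized normalization of $\Th$, is essentially Hain's approach in \cite{2013Hain}. Both lead to the same answer; the GRR route has the advantage of packaging the quadratic dependence on $A$ and the boundary corrections in a single pushforward, while the test-curve route pins down each coefficient by intersecting with explicit one-parameter families and is what extends most directly to Chow. Your explanation of the factor $\tfrac14$ as the artifact of the double-counting $\delta_h^P=\delta_{g-h}^{[n]\setminus P}$ with $a_{[n]\setminus P}^2=a_P^2$ is the right bookkeeping.
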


In \cite{2014GrushevskyZakharovA}, the second and fourth authors pushed this method further, proving analogues of Theorems~\ref{thm:thetact} and \ref{thm:Theta} for an extension of the Abel--Jacobi map to $s_A\colon \oMgn^o \to \Xg^{part}$, where $\oMgn^o$ is the moduli space of curves with at most one non-separating node and $\Xg^{part}$ is the universal family over Mumford's partial compactification of $\Ag$, which parameterizes semiabelian varieties of torus rank at most one.  Attempting to proceed any deeper into the boundary, however, one encounters two technical difficulties. First, the extension of the formula $[\Zg]=\Th^g/g!$ to $\Xg^{part}$ obtained in \cite{2014GrushevskyZakharovA} is already quite complicated, and the deeper boundary strata of $\Xg$ have an increasingly complex combinatorial structure, so the corresponding calculations do not seem combinatorially manageable at this point. In addition, on boundary strata not contained in  $\oMgn^o$ the Abel--Jacobi map does not extend to a morphism, and its indeterminacy locus needs to be resolved. The question of extending the Abel--Jacobi map \eqref{eq:AJcompact} to various compactifications of the universal Jacobian variety and resolving the singularities of the Abel--Jacobi map has been considered in a number of recent papers (see  \cite{2015Dudin, 2014Holmes, 2015KassPagani, 2011Melo,  2014MeloRapagnettaViviani, 2016Melo}).

\subsection{The double ramification cycle and Pixton's relations}
\label{DRrelations}

An alternative way to extend $\ZgA$ to $\oM_{g,n}$ is given by the double ramification cycle.  To motivate the definition, we observe that the locus $\ZgA\subset\Mgn$ of marked curves satisfying condition \eqref{eq:DRopen} has an equivalent definition as the locus of $(C,p_1,\ldots,p_n)$ admitting a ramified cover $f\colon C\to \PP^1$ such that
\begin{itemize}
\item $f^{-1}(0)=\left\lbrace p_i\; | \; a_i>0\right\rbrace$ and $f^{-1}(\infty)=\left\lbrace p_i \;|\;a_i<0\right\rbrace$.
\item The ramification profiles of $f$ over $0$ and $\infty$ are $\mu=\left\lbrace a_i\;|\;a_i>0\right\rbrace$ and $\nu=\left\lbrace |a_i|\;|\;a_i<0\right\rbrace$, respectively.
\end{itemize}
(No condition is imposed on the marked points $p_i$ with $a_i = 0$.) To extend $\ZgA$ to $\oMgn$, then, one can compactify the space of such ramified covers, allowing both $C$ and the target $\PP^1$ to degenerate. The resulting object is known as the moduli space of rubber relative stable maps to $\PP^1$ and is denoted $\oM_{g,n_0}(\PP^1;\mu,\nu)^{\sim}$, where $n_0:=\#\left\lbrace i\; | \;a_i=0\right\rbrace$.

For more details on the moduli space of rubber relative stable maps, see \cite{2005FaberPandharipande}.  The crucial property required here is that it admits a virtual fundamental class (constructed algebraically by Jun Li \cite{2001Li}):
\[[\oM_{g,n_0}(\PP^1;\mu,\nu)^{\sim}]^{\text{vir}} \in CH_{\text{vdim}}(\oM_{g,n_0}(\PP^1;\mu,\nu)^{\sim}).\]
From here, one defines the double ramification cycle by pushforward
\begin{equation*}
  R_{g,A}:=\tau_*[\oM_{g,n_0}(\PP^1;\mu,\nu)^{\sim}]^{\mathrm{vir}}\in CH^g(\oMgn)
\end{equation*}
along the natural forgetful morphism $\tau\colon\oM_{g,n_0}(\PP^1;\mu,\nu)^{\sim}\to\oMgn$.

The restriction of $R_{g,A}$ to $\Mgn$ is the class of $\ZgA$, essentially by definition. Marcus and Wise \cite{2013MarcusWise} proved, moreover, that the restriction of $R_{g,A}$ to $\Mgnct$ is equal to the class $R^{ct}_{g,A}$ defined by \eqref{eq:Rct}, and in \cite{2012CavalieriMarcusWise}, Cavalieri, Marcus, and Wise independently computed the restriction of $R_{g,A}$ to the moduli space $\Mgnrt$ of curves with rational tails, obtaining the same result as in Theorem \ref{thm:Theta}.

Faber and Pandharipande proved in \cite{2005FaberPandharipande} that the class $R_{g,A}$ is tautological, and provided a method to calculate it in principle.  The computational difficulties of their method, however, are too great to obtain an explicit formula.  This situation was remedied by a conjecture of Pixton, which proposed not only a formula for $R_{g,A}$ (generalizing equation \eqref{eq:Thetag} of Theorem \ref{thm:Theta}) but also a generalization of the $\Theta$-relations (equation \eqref{eq:Thetag+1} of Theorem \ref{thm:Theta}) to all of $\oM_{g,n}$.

The basic idea of Pixton's conjecture, which we explain further in Section \ref{subsec:Pixton} below, is to view the terms $[s_A^*\Theta]^g$ and $[s_A^*\Theta]^{g+1}$ appearing in Theorem~\ref{thm:Theta} as appropriate multiples of the parts in degree $g$ and $g+1$ of the mixed-degree class $\exp [s_A^*\Theta]\in CH^*(\Mgnct)$.  The expression \eqref{eq:Theta} can be packaged into an elegant formula for $\exp [s_A^*\Theta]$ as a graph sum, and Pixton used an ingenious modification of this graph sum to extend $\exp [s_A^*\Theta]$ to a class $\Omega_{g,A} \in CH^*(\oMgn)$.  Generalizing both statements of Theorem~\ref{thm:Theta}, then, he conjectured that $\Omega_{g,A}$ coincides with $R_{g,A}$ in codimension $g$ and vanishes in higher codimension.

Both parts of Pixton's conjecture have recently been proven:

\begin{theorem}[\cite{2016JandaPandharipandePixtonZvonkine, 2016CladerJanda}]
Let $[\bullet]_d$ denote the degree-$d$ part of a mixed-degree class in $CH^*(\oMgn)$. Then the class $\Om_{g,A}$ satisfies the following:
\begin{enumerate}
\item $[\Om_{g,A}]_g=R_{g,A}$.
\item $[\Om_{g,A}]_d=0$ for $d>g$.
\end{enumerate}
\label{thm:Pixton}
\end{theorem}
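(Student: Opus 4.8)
This statement has two parts of quite different character: part (1) is an identification of $[\Om_{g,A}]_g$ with a geometrically defined cycle, while part (2) is a vanishing. Both, however, rest on the fact that $\Om_{g,A}$ is extracted as the $r^0$-coefficient of a family of classes $P^{d,r}_{g,A}\in CH^d(\oMgn)$ built as sums over stable graphs whose half-edges carry weights in $\ZZ/r\ZZ$ and whose edges and vertices are decorated by Bernoulli-polynomial expressions in these weights and the $a_i$. The first thing I would establish, therefore, is Pixton's \emph{polynomiality}: that for each fixed $d$ the class $P^{d,r}_{g,A}$ agrees, for all $r$ large, with a polynomial in $r$, so that the constant term $\Om_{g,A}=[P^{d,r}_{g,A}]_{r^0}$ is well-defined. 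This is a finiteness property of the weighted graph sum, which I would prove by induction on the first Betti number of the graph, controlling how the averaging over $\ZZ/r\ZZ$-weightings interacts with the edge and vertex decorations.

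For part (1), the plan is to compute $R_{g,A}$ and match it term by term against $[\Om_{g,A}]_g$. I would introduce, for each $r$, the moduli space of curves carrying an $r$-th root $L$ of $\calO(\sum_i a_ip_i)$ (possibly twisted by a power of $\omega_\pi$), with the weights $a_i$ read modulo $r$ at the branches; over this space Chiodo's formula expresses the Chern class of the derived pushforward $-R\pi_*L$ as an explicit sum over $\psi$- and $\ka$-decorated stable graphs. The geometric heart of the argument is a virtual $\mathbb{C}^*$-localization computation on the moduli of rubber relative stable maps to $\PP^1$ defining $R_{g,A}$: its fixed loci are indexed by exactly the decorated graphs appearing in Chiodo's formula, so that the localization contributions reassemble, in codimension $g$, into the same weighted graph sum. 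Reading off the $r^0$-coefficient of this identity then yields $[\Om_{g,A}]_g=R_{g,A}$.

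For part (2), I would produce the vanishing as a tautological relation extracted from equivariant localization, in the spirit of the derivation of relations from the Gromov--Witten theory of $\PP^1$. Applying virtual $\mathbb{C}^*$-localization to a suitable class on the moduli of stable maps to $\PP^1$ yields a contribution that is a priori a Laurent polynomial in the equivariant parameter $t$, yet which represents a class admitting a finite non-equivariant limit and therefore has no pole in $t$. Equating the coefficients of the negative powers of $t$ to zero produces relations among tautological classes, and the task is to show that, after extracting the $r^0$-coefficient via the polynomiality above, the relation obtained in codimension $d>g$ is precisely $[\Om_{g,A}]_d=0$. The dimension bookkeeping that guarantees the class is concentrated in low codimension --- so that the higher-degree localization contributions must cancel --- is comparatively soft; the content is in matching the surviving contributions to Pixton's prescribed graph sum.

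The main obstacle, common to both parts, is the combinatorial identification of the localization graph sums with Pixton's graph sum together with the control of their dependence on $r$. The vertex contributions are Hodge integrals and the edge contributions are sums of Bernoulli polynomials in the residues $a_i\bmod r$ and the edge weights; showing that these assemble into exactly Pixton's decorations, and that the assembled sum is genuinely polynomial in $r$ so that its constant term is meaningful and matches on both sides, is where essentially all of the difficulty lies. I expect the polynomiality argument and the precise bookkeeping of the edge and vertex factors to be the crux, with the geometric inputs (Chiodo's formula, the localization formula, and the dimension count) serving as comparatively rigid scaffolding.
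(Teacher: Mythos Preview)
The paper does not itself prove this theorem; it is quoted from \cite{2016JandaPandharipandePixtonZvonkine} (part~(1)) and \cite{2016CladerJanda} (part~(2)), with only a one-paragraph summary of the methods immediately following the statement. So there is no detailed argument here to compare against line by line, and your sketch should be read against that summary.

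Your outline for part~(1) is close in spirit to the actual argument --- polynomiality in $r$, Chiodo's formula on moduli of $r$-th roots, virtual localization, and matching of graph sums --- but there is one substantive discrepancy. The localization in \cite{2016JandaPandharipandePixtonZvonkine} is carried out on the moduli of relative stable maps to an \emph{orbifold} projective line, not on ordinary rubber maps to $\PP^1$. The orbifold target is what forces the $r$-th-root structures to appear at the fixed loci and hence brings Chiodo's class into the localization output; localizing on the ordinary rubber space, as you propose, simply returns $R_{g,A}$ by definition and gives no independent handle on $\Om_{g,A}$. So as written, your part~(1) plan has a gap: you need the orbifold target (or some equivalent device) to make the comparison non-tautological.

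For part~(2), the paper's summary says the vanishing is obtained by ``leveraging relations on the orbifold stable maps space previously observed in \cite{2013Clader}'', and notes that it also follows from the 3-spin relations of \cite{2015PandharipandePixtonZvonkine,2015Janda}. Your proposal --- extracting relations from the pole-cancellation in equivariant $\PP^1$-localization --- is not the route taken in \cite{2016CladerJanda}; it is closer to the method of \cite{2015Janda} used to establish the 3-spin relations themselves. That is a plausible indirect path (3-spin $\Rightarrow$ DR vanishing), but it is a genuinely different decomposition from what the paper describes, and you would still need the reduction step from 3-spin relations to the double ramification vanishing, which is its own argument.
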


The second of these statements is what we refer to as ``Pixton's double ramification cycle relations."

\begin{remark} The restriction of the class $\Om_{g,A}$ to the moduli space $\oMgn^o$ of \cite{2014GrushevskyZakharovA} is equal to the pullback of the zero section $\Zg^{part}$ of the partial compactification of $\Xg^{part}$ under the Abel--Jacobi map $s_A$.
\end{remark}

We give the explicit formula for $\Om_{g,A}$ as equation \eqref{eq:pixton} in Section \ref{subsec:Pixton}. The calculations of Section \ref{sec:main}, in which we prove Theorems~\ref{thm:mainvanishing} and \ref{thm:mainthm}, do not require the full formula for $\Om_{g,A}$ but only the existence of an extension of the $\Theta$-relations.

\section{Proofs of vanishing theorems}

\label{sec:main}

We now turn to the proof of the vanishing of the tautological ring $R^k(\Mgn)$ in degree $k \geq g - \delta_{0n}$, and of Graber--Vakil's Theorem~$\star$, via explicit computation from the relations discussed in the previous section.  More precisely, what we need are the $\Theta$-relations (equation~\eqref{eq:Thetag+1}), the fact that these are polynomial relations of degree $2g+2$ in the variables $a_i$, and the fact that for each tuple $A$ there exists a relation
\begin{equation}
\label{eq:thetaRHS}
[s_A^*\Theta]^{g+1}=D_{A,g+1}\in CH^{g+1}(\oMgn),
\end{equation}
where $s_A^*\Theta$ on $\oMgn$ is defined by \eqref{eq:Theta} and $D_{A,g+1}$ is a tautological class supported away from $\Mgnct$; this last fact follows immediately from Pixton's double ramification cycle relations.  We remark that it is conjectured, though not definitively established (see \cite{PixDR2} or \cite[Lemma 2.1]{2016CladerJanda}), that the class $[\Omega_{g,A}]_{g+1}$ is polynomial in the variables $a_i$, but this is not necessary for our results.

\subsection{Some properties of tautological classes}\label{subsec:technical}

Several formulas involving the tautological classes on $\oMgn$ will be useful later.

Let $\pi_{n+1}\colon\oM_{g,n+1}\to\oMgn$ be the map that forgets the last marked point. The pullbacks of $\psi$- and $\ka$-classes under it are as follows (see \cite[Section 1]{1996ArbarelloCornalba}):
\begin{equation}
\label{eq:pullback}
  \pi_{n+1}^*\psi_i=\psi_i-\de_0^{\{i,n+1\}},\quad \pi_{n+1}^*\ka_a=\ka_a-\psi_{n+1}^a,
\end{equation}
where $\de_0^{\{i,n+1\}}$ is defined as in Theorem~\ref{thm:Theta}.  The pushforward of a monomial in the $\psi$-classes is equal to
\begin{equation*}
  \pi_{n+1,*}[\psi_1^{k_1}\cdots\psi_n^{k_n}\psi_{n+1}^{k_{n+1}+1}]=\psi_1^{k_1}\cdots\psi_n^{k_n}\ka_{k_{n+1}}.
\end{equation*}

More generally, suppose $n>m$.  Then the pushforward of a monomial in the $\psi$-classes from $\oMgn$ to $\oM_{g,m}$ is given by the following formula, originally due to Faber:
\begin{equation}\label{eq:pushka}
  (\pi_{m+1}\circ\cdots\circ\pi_n)_*[\psi_1^{k_1}\cdots\psi_m^{k_m}\psi_{m+1}^{k_{m+1}+1}\cdots\psi_n^{k_n+1}]=\psi_1^{k_1}\cdots\psi_m^{k_m}R(k_{m+1},\ldots,k_n).
\end{equation}
Here, $R(k_{m+1},\ldots, k_n)$ is a polynomial in the $\ka$-classes,
\begin{equation*}
  R(k_{m+1}, \ldots,k_n)= \sum_{\sigma\in S_{n-m}} \ka_{\sigma},
\end{equation*}
where $\ka_{\sigma}$ is defined as follows:
Given a permutation $\sigma\in S_{n-m}$, write it as a product $\sigma=\alpha_1\cdots\alpha_{\nu(\sigma)}$ of disjoint cycles, and for a cycle $\alpha$, define $|\alpha|=\sum_{i\in\alpha} k_{m + i}$ to be the sum of those among $k_{m+1},\ldots,k_n$ that are permuted by $\alpha$. Then
\begin{equation*}
  \ka_{\sigma}:=\ka_{|\alpha_1|}\cdots\ka_{|\alpha_{\nu(\sigma)}|}.
\end{equation*}
Note that
\begin{equation}
\label{eq:observation}
R(k_{m+1}, \ldots,k_n)=\ka_{k_{m+1}}\cdots \ka_{k_n}+ (\text{monomials in fewer than $n-m$ $\ka$-classes}),
\end{equation}
where the first term on the right-hand side corresponds to the trivial permutation in $S_{n-m}$. In particular, we note that $R(k,0,\ldots,0)$ is a nonzero multiple of $\ka_k$ and $R(0,\ldots,0)$ is a nonzero constant (recall that $\ka_0=2g-2+n$).

We will also use the following simple lemma:
\begin{lemma} \label{le:vanish}
Let $1\leq i_1,\ldots,i_k\leq n$ be integers, and let $I\subset[n]$. Then
\begin{equation}\label{eq:vanish}
 \de_0^I\psi_{i_1}\cdots\psi_{i_k}=0
\end{equation}
on $\oMgn$ whenever $\#\left\lbrace j|i_j\in I\right\rbrace\geq \# I-1$.
\end{lemma}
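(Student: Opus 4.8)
The plan is to treat the geometrically meaningful case $\#I\geq 2$ by realizing $\de_0^I$ as the pushforward of the fundamental class under the gluing morphism
\[
\xi\colon \oM_{0,\#I+1}\times\oM_{g,n-\#I+1}\to\oMgn
\]
that attaches a genus-$0$ component carrying the markings in $I$ (plus the node) to a genus-$g$ component carrying the remaining markings (plus the node), and then to estimate $\de_0^I\,\psi_{i_1}\cdots\psi_{i_k}$ by restricting the $\psi$-classes to this boundary divisor. The key observation is that the factors $\psi_{i_j}$ with $i_j\in I$ restrict to $\psi$-classes on the genus-$0$ factor $\oM_{0,\#I+1}$, whose dimension is only $\#I-2$; as soon as the number of such factors exceeds this dimension, their product must vanish, and the hypothesis $\#\{j\mid i_j\in I\}\geq\#I-1$ is precisely the condition that forces this.

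Concretely, I would first record that $\de_0^I=\xi_*(1)$, the gluing map being generically injective since its two branches are distinguishable (any automorphism factor is irrelevant to a vanishing statement). The projection formula then gives
\[
\de_0^I\,\psi_{i_1}\cdots\psi_{i_k}=\xi_*\!\left(\xi^*(\psi_{i_1}\cdots\psi_{i_k})\right).
\]
Next I would compute $\xi^*\psi_{i_j}$: for $i_j\notin I$ the marked point lies on the genus-$g$ component and the class pulls back to the corresponding $\psi$-class there, while for $i_j\in I$ it pulls back to the $\psi$-class at $p_{i_j}$ on the genus-$0$ factor $\oM_{0,\#I+1}$, with no correction term, since the cotangent line at a marked point other than the node depends only on the component containing that point. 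Setting $r:=\#\{j\mid i_j\in I\}$, the pulled-back class is thus an external product whose $\oM_{0,\#I+1}$-factor is a product of $r$ divisor classes. Because $\dim\oM_{0,\#I+1}=\#I-2$ while the hypothesis gives $r\geq\#I-1>\#I-2$, this factor lies in a Chow group of codimension exceeding the dimension of $\oM_{0,\#I+1}$, hence vanishes; applying $\xi_*$ to zero yields \eqref{eq:vanish}.

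The one step demanding genuine care is the clean restriction formula $\xi^*\psi_{i_j}=\psi_{i_j}$ for $i_j\in I$, i.e.\ the absence of a boundary correction at the unglued marked points; this is the standard comparison of cotangent lines under gluing, but it is the crux on which the whole dimension count rests, and everything else reduces to the projection formula and elementary bookkeeping. Finally, the degenerate conventions of Theorem~\ref{thm:Theta} fall outside this geometric picture: for $I=\emptyset$ one has $\de_0^\emptyset=0$, so \eqref{eq:vanish} is immediate, and the substantive content of the lemma is exactly the case $\#I\geq 2$ treated above, where $\de_0^I$ is an honest separating boundary divisor with a genus-$0$ branch.
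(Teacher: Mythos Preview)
Your proposal is correct and follows essentially the same approach as the paper: realize $\de_0^I$ via the gluing map from $\oM_{0,\#I+1}\times\oM_{g,n-\#I+1}$, observe that $\psi_{i_j}$ for $i_j\in I$ restricts to the corresponding $\psi$-class on the genus-$0$ factor, and conclude by the dimension count $\dim\oM_{0,\#I+1}=\#I-2<\#I-1$. The paper's proof is terser (it simply asserts the restriction statement as ``clear'' and omits the projection formula), but the argument is the same; your explicit handling of the degenerate case $I=\emptyset$ and your flagging of the restriction formula as the one point requiring care are reasonable additions.
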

\begin{proof}
Indeed, $\de_0^I$ is the class of the boundary divisor $\De_0^I$, which is the image under the gluing map of the product $\oM_{0,\# I +1}\times \oM_{g,n-\# I+1}$. When $i_j\in I$, it is clear that the class $\psi_{i_j}$ on $\oMgn$ restricts on $\De_0^I$ to the $\psi$-class of the corresponding point on $\oM_{0,\# I+1}$. Hence any class of the form~\eqref{eq:vanish} vanishes if $\#\left\lbrace j|i_j\in I\right\rbrace\geq \#I-1=\dim \oM_{0,\# I+1}+1$.
\end{proof}

Finally, we recall the low genus topological recursion relations, expressing the divisors $\psi_i$ and $\ka_1$ as boundary divisors (see Theorem 2.2 in \cite{1998ArbarelloCornalba}):
\begin{equation}\label{eq:M11}
\ka_1=\psi_1=\frac{1}{12}\delta_{irr} \in CH^1(\oM_{1,1}),
\end{equation}
\begin{equation}\label{eq:TRR}
\psi_i=\sum_{\substack{i\in I\subset [n]\\ a,b \notin I}}\delta_0^I\in CH^1(\oM_{0,n}).
\end{equation}
Here, $\delta_{irr}$ is the class of the locus of curves that have a non-separating node, and in equation~\eqref{eq:TRR}, $a$ and $b$ are any two fixed elements of $[n] \setminus \{i\}$.

\subsection{Additive generators of the tautological ring}
\label{sec:strata}

An explicit set of additive generators for the tautological ring can be defined via the strata algebra (see \cite{2003GraberPandharipande} or \cite{2013Pixton}).  Specifically, recall that a stable graphs $\Ga=(V,H,g,p,\iota)$ consist of the following data:
\begin{enumerate}
\item A set of vertices $V$ equipped with a genus function $g\colon V\to \mathbb{Z}_{\geq 0}$.

\item A set of half-edges $H$ equipped with a vertex assignment $p\colon H\to V$ and an involution $\iota\colon H\to H$.

\end{enumerate}
We define the set of edges $E$ of $\Ga$ to be the set of orbits of $\iota$ that are of cardinality $2$, and the set of legs $L$ of $\Ga$ to be the set of fixed points of $\iota$; the pair $(V,E)$ is then an ordinary graph. The valence of a vertex $v\in V$ is defined as $n(v)=\#  p^{-1}(v)$.  The condition of stability is that $\Gamma$ must be connected and that each vertex $v$ must satisfy $2g(v) -2+n(v)>0$.

We define the genus of a stable graph $\Ga$ to be
\begin{equation}\label{eq:genus}
g(\Ga)=h^1(\Ga)+\sum_{v\in V}g(v),
\end{equation}
where $h^1(\Ga)=\# E-\# V+1$.
An automorphism of a stable graph is a permutation on both $V$ and $H$ that preserves the incidence relations and acts as the identity on legs.

Given a stable curve $C$ of genus $g$ with $n$ marked points, its dual graph is a stable graph of genus $g$ with $n$ legs. For a stable graph $\Ga$ of genus $g$ with $n$ legs, let
\begin{equation*}
\oM_{\Ga}:=\prod_{v\in V}\oM_{g(v),n(v)}.
\end{equation*}
Then there is a canonical gluing morphism
\begin{equation}\label{eq:xi}
\xi_{\Ga}\colon\oM_{\Ga}\to \oMgn,
\end{equation}
whose image is the locus in $\oMgn$ in which the generic point corresponds to a curve with stable graph $\Ga$.

Let $\Ga$ be a stable graph. For each $v\in V$, let $\lbrace x_i[v]\rbrace_{i>0}$ and $\lbrace y[h]\rbrace_{h\in p^{-1}(v)}$ be sets of positive integers.  Associated to each such choice of integers, there is a basic class
\begin{equation*}
\ga_v=\prod_{i>0}\ka_i^{x_i[v]}\prod_{h\in p^{-1}(v)}
\psi_h^{y[h]}\in CH^{d(\ga_v)}(\oM_{\Ga}),
\end{equation*}
having degree
\begin{equation*}
d(\ga_v)=\sum_{i>0}ix_i[v]+\sum_{h\in p^{-1}(v)} y[h].
\end{equation*}
We define
\begin{equation}\label{eq:gamma}
\ga=\prod_{v\in V}\ga_v \in CH^{d(\ga)}(\oM_{\Ga}),
\end{equation}
whose degree is $d(\ga):=\sum_{v\in V}d(\ga_v)$. The pair $(\Ga,\ga)$ defines a tautological class $\xi_{\Ga*}(\ga)$ of degree $d(\gamma) + \#E$, and the tautological ring of $\oMgn$ is spanned by classes of this form.

\subsection{A preliminary lemma}
As a starting point toward the proof of Theorem~\ref{thm:mainvanishing}, and as an illustration of our methods, we prove:

\begin{lemma} \label{le:psig+1}
Any degree-$k$ monomial in the $\psi$-classes vanishes on $\Mgn$ if $k\geq g+1$ and $n\geq 2g+3$.
\end{lemma}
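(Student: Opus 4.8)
The plan is to restrict the $\Theta$-relation \eqref{eq:Thetag+1} to the open locus $\Mgn$, where it collapses to a relation purely among $\psi$-classes, and then to extract the vanishing of individual $\psi$-monomials by a divisibility argument in the polynomial ring $\QQ[a_1,\ldots,a_n]$.

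First I would compute the restriction of $s_A^*\Theta$ to $\Mgn$. Every genuine boundary class $\delta_h^P$ vanishes on the open moduli space, so the only surviving terms of \eqref{eq:Theta} are the unstable ones, for which $\delta_0^{\{i\}}=\delta_g^{[n]\setminus\{i\}}=-\psi_i$. Because $a_{[n]\setminus\{i\}}=-a_i$ whenever $\sum_j a_j=0$, the pairs $(h,P)=(0,\{i\})$ and $(g,[n]\setminus\{i\})$ contribute equally, and a short computation gives
\begin{equation*}
  s_A^*\Theta|_{\Mgn}=\frac{1}{2}\sum_{i=1}^n a_i^2\,\psi_i.
\end{equation*}
Raising to the $(g+1)$-st power and invoking \eqref{eq:Thetag+1}, I obtain, for every integer vector $A$ with $\sum_i a_i=0$, the relation $\big(\sum_i a_i^2\psi_i\big)^{g+1}=0$ in $CH^{g+1}(\Mgn)$.

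Expanding by the multinomial theorem writes the left-hand side as $\sum_{|\mathbf{k}|=g+1}\binom{g+1}{\mathbf{k}}\big(\prod_i a_i^{2k_i}\big)\psi^{\mathbf{k}}$, where $\psi^{\mathbf{k}}:=\prod_i\psi_i^{k_i}$, which I would regard as a single polynomial $F(a)$ of degree $2g+2$ whose coefficients are classes in $CH^{g+1}(\Mgn)$. Since $F$ vanishes at every integer point of the rational hyperplane $\{\sum_i a_i=0\}$, and these points are Zariski dense, $F$ is divisible by $L:=\sum_i a_i$. If I can upgrade this to $F=0$, then each coefficient $\psi^{\mathbf{k}}$ with $|\mathbf{k}|=g+1$ vanishes; the case $k>g+1$ then follows at once by factoring any degree-$k$ monomial as $\psi^{\mathbf{k}}$ times a monomial of the remaining degree.

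The crux---and the only step using $n\geq 2g+3$---is deducing $F=0$ from $L\mid F$. The key observation is that every exponent in $F$ is even, so $F$ is invariant under all sign changes $a_i\mapsto-a_i$. Applying such a sign change $\sigma$ to a factorization $F=L\cdot G$ shows that $F$ is likewise divisible by the linear form $L_\sigma=\sum_i\pm a_i$; running over all $2^{n-1}$ sign patterns (up to an overall sign) produces that many pairwise non-associate linear factors of $F$. As $\QQ[a_1,\ldots,a_n]$ is a UFD, their product, of degree $2^{n-1}$, must divide $F$. But $\deg F=2g+2$, and $n\geq 2g+3$ forces $2^{n-1}\geq 2^{2g+2}>2g+2$, so $F=0$ as desired. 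I expect the restriction computation and the reduction to higher degree to be routine; the sign-symmetry divisibility argument is where the real content lies.
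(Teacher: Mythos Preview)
Your proof is correct and takes a genuinely different route from the paper's. Both start from the restriction $\big(\sum_i a_i^2\psi_i\big)^{g+1}=0$ on $\Mgn$ for all $A$ with $\sum a_i=0$, but the paper then eliminates $a_{2g+3}$, views the result as a polynomial in the free variables $a_1,\ldots,a_{2g+2}$ with Chow-valued coefficients, and performs an explicit Gaussian elimination: a descending induction on the exponent of $\psi_{2g+3}$, at each step choosing a particular monomial $a_1^{m_1}\cdots a_{2g+2}^{m_{2g+2}}$ whose coefficient isolates one $\psi$-monomial modulo those already shown to vanish (the case $n>2g+3$ is then handled by pullback). You instead keep all $n$ variables, exploit that $F$ has only even exponents and is therefore invariant under every sign change $a_i\mapsto -a_i$, and conclude by a UFD degree count that $F$ is divisible by $2^{n-1}>2g+2$ pairwise non-associate linear forms, forcing $F=0$ identically. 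Your argument is slicker and in fact only needs $2^{n-1}>2g+2$, a weaker hypothesis than $n\geq 2g+3$. The paper's coefficient-by-coefficient extraction, however, is chosen deliberately: it can be re-run with Pixton's boundary class $D_{A,g+1}$ on the right-hand side to produce explicit boundary expressions for each $\psi$-monomial on $\oMgn$ (Remark~\ref{rem:formulas}), and this is the engine behind the constructive proof of Theorem~$\star$; your sign-symmetry trick does not obviously survive that passage, since the non-compact-type part of $\Omega_{g,A}$ is not even in the $a_i$. One small point worth a sentence in your write-up: the UFD step lives in $\QQ[a_1,\ldots,a_n]$, whereas $F$ has coefficients in the $\QQ$-vector space $CH^{g+1}(\Mgn)$; expanding $F$ over a $\QQ$-basis of that space (or of the finite-dimensional span of its coefficients) makes the passage immediate.
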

\begin{proof} First, assume that $k=g+1$ and $n=2g+3$.

Let $\iota\colon\Mgg\to \M_{g,2g+3}^{ct}$ be the inclusion map, and let $A=(a_1,\ldots,a_{2g+3})$ be such that $\sum a_i=0$.  The restriction of the $\Th$-relations to $CH^{g+1}(\Mgg)$ give
\begin{equation*}
  \iota^*[s_A^*\Theta]^{g+1}= \left[\frac{1}{2}\sum_{i=1}^{2g+3}a_i^2\psi_i\right]^{g+1}=0\in CH^{g+1}(\Mgg),
\end{equation*}
where here and for the rest of the proof $\psi_i$ denotes the
$\psi$-class on $\M_{g,2g+3}$.
Eliminating $a_{2g+3}=-(a_1+\ldots+a_{2g+2})$ and dropping the $1/2$, we obtain
\begin{equation}\label{eq:psi2g+3open}
 \left[\sum_{i=1}^{2g+2}a_i^2\psi_i+(a_1+\ldots+a_{2g+2})^2\psi_{2g+3}\right]^{g+1}=0
\end{equation}
for any $(a_1\ldots,a_{2g+2})\in \ZZ^{2g+2}$. This relation is a homogeneous polynomial in the integer variables $a_i$ and the classes $\psi_i$, of degrees $2g+2$ and $g+1$, respectively, and we prove the lemma by alternatively considering \eqref{eq:psi2g+3open} as a polynomial in one set of variables or the other.

First, we view \eqref{eq:psi2g+3open} as a polynomial in the $a$-variables taking values in $CH^{g+1}(\Mgg)$. It vanishes for all integer values of the $a_i$ only if it is the zero polynomial--- in other words, only if the coefficient in front of each monomial in the $a_i$ is zero. Thus, any monomial in the $a_i$ of degree $2g+2$ gives a relation in $CH^{g+1}(\Mgg)$, which itself is a polynomial of degree $g+1$ in the $\psi$-classes. We need to check that there are enough such relations to ensure that every monomial in the $\psi$-classes of degree $g+1$ vanishes.

We now make a key observation. Consider \eqref{eq:psi2g+3open} as a polynomial in the $\psi$-classes whose coefficients are polynomials in the $a$-variables. For every $1\leq j\leq 2g+2$, $\psi_j$ appears in \eqref{eq:psi2g+3open} in the term $a_j^2\psi_j$, hence the coefficient of any $\psi$-monomial that is a multiple of $\psi_j^k$ is an $a$-polynomial that is a multiple of $a_j^{2k}$.

If we now again view \eqref{eq:psi2g+3open} as a polynomial in the $a_i$, then the observation of the preceding paragraph implies that, for every $1\leq j\leq 2g+2$, the coefficient of any monomial that is not a multiple of $a_j^{2k}$ is a polynomial in the $\psi$-classes that contains no monomials that are multiples of $\psi_j^k$. For example, the coefficient of the monomial $a_1\cdots a_{2g+2}$ does not contain the classes $\psi_1,\ldots,\psi_{2g+2}$, and indeed, it is equal to $(2g+2)!\psi_{2g+3}^{g+1}$. We thus obtain our first vanishing,
\begin{equation}
 \psi_{2g+3}^{g+1}=0\in CH^{g+1}(\Mgg)
\end{equation}
which serves as the base of a descending induction on the power of $\psi_{2g+3}$ from which we prove the claim.

Let $1\leq k\leq g+1$, and suppose that we have shown that any monomial of degree $g+1$ in the $\psi$-classes that is a multiple of $\psi_{2g+3}^{k}$ vanishes. Let $K=(k_1,\ldots,k_{2g+2})$ be non-negative integers satisfying
\begin{equation}
  \sum_{i=1}^{2g+2}k_i=g+1-(k-1),
  \label{eq:dimcount}
\end{equation}
and denote
\begin{equation*}
  \Psi_K:=\psi_1^{k_1}\cdots\psi_{2g+2}^{k_{2g+2}}.
\end{equation*}
Let $j$ be the $2(k-1)$-st element, in increasing numerical order, in the set
\[I_K:=\{i\; | \;k_i=0\}\subset\{1,\ldots,2g+2\}.\]
(The fact that $\# I_K\ge 2(k-1)$ is clear: indeed, otherwise we would have $g+1-(k-1)=\sum_{i=1}^{2g+2}k_i\ge (2g+2)-2(k-1)+1$, which would imply $k\ge g+3$.)
For $1 \leq i \leq 2g+2$, define $m_i$ by
\begin{equation*}
  m_i=
  \begin{cases}
    2k_i, & i\notin I_K, \\
    1, & i\in I_K\mbox{ and }i\leq j, \\
    0, & i\in I_K\mbox{ and }i>j.
  \end{cases}
\end{equation*}
Since $k_1+\ldots+k_{2g+2}=g+1-(k-1)$, we have $m_1+\ldots+m_{2g+2}=2g+2$. To show that $\Psi_K\psi_{2g+3}^{k-1}=0$, we consider the monomial $a_1^{m_1}\cdots a_{2g+2}^{m_{2g+2}}$.

The coefficient of this monomial in \eqref{eq:psi2g+3open} is a polynomial of degree $g+1$ in the $\psi$-classes, and by the above observation, this polynomial contains no monomial that is a multiple of $\psi_j^{k_j+1}$ for any $1\leq j\leq 2g+2$. But by \eqref{eq:dimcount}, any $\psi$-monomial of degree $g+1$ that is not a multiple of any $\psi_j^{k_j+1}$ is a multiple of either $\Psi_K$ or $\psi_{2g+3}^k$. Hence the monomial $a_1^{m_1}\cdots a_{2g+2}^{m_{2g+2}}$ imposes the following relation:
\begin{equation*}
C_K\Psi_K\psi_{2g+3}^{k-1}+[\mbox{a multiple of }\psi_{2g+3}^k]=0 \in CH^{g+1}(\Mgg),
\end{equation*}
where $C_K$ is a positive multinomial coefficient depending on $K$. By induction, all $\psi$-monomials that are multiples of $\psi_{2g+3}^k$ vanish, hence so does $\Psi_K\psi_{2g+3}^{k-1}$. This proves that any $\psi$-monomial of degree $g+1$ vanishes on $\Mgg$.

To finish the proof of the lemma, we note that, for $n>2g+3$, any $\psi$-monomial of degree $g+1$ on $\Mgn$ can be obtained using \eqref{eq:pullback} by pulling back a $\psi$-monomial from $\Mgg$ along a forgetful map, and that the vanishing of all $\psi$-monomials in degree $g+1$ trivially implies vanishing in higher degrees.
\end{proof}

\begin{remark}
\label{rem:formulas}
We have shown that the $\Theta$-relations imply the vanishing of the monomials of degree $g+1$ in the $\psi$-classes on $\Mgg$, by what is really just a multidimensional Gaussian elimination. The same Gaussian elimination can be used to obtain boundary formulas for these classes on all of $\oM_{g,2g+3}$. For this, we use Pixton's double ramification cycle relations, as stated in equation~\eqref{eq:thetaRHS}. The left-hand side of~\eqref{eq:thetaRHS} is a polynomial of degree $2g+2$ in the $a_i$, each coefficient of which is a polynomial of degree $g+1$ in the $\psi$-classes and the boundary divisors. The right-hand side is some tautological class supported on the divisor $\Delta_{irr}$ of curves with a non-separating node. Moving all boundary terms from the left to the right, we obtain an expression of the left-hand side of \eqref{eq:psi2g+3open} as a boundary class on $\oM_{g,2g+3}$.

To obtain boundary formulas for degree $g+1$ monomials in the $\psi$-classes, we proceed from equation \eqref{eq:psi2g+3open} as in the proof of Lemma \ref{le:psig+1}, but we now keep track of the boundary on the right-hand side. If we assume that Pixton's class $[\Omega_{g,A}]_{g+1}$ is a polynomial in the $a_i$, then so is $D_{g,A}$, and the proof is identical: each monomial of degree $2g+2$ in the $a_i$ imposes a relation in $\oM_{g,2g+3}$, and these relations imply the vanishing of all $\psi$-monomials on $\Mgg$, so the same Gaussian elimination produces boundary formulas for all such monomials on $\oM_{g,2g+3}$. Even without assuming polynomiality, one can construct a collection of finite difference operators\footnote{For example, if $f(x,y)=ax^2+bxy+cy^2$, then $b=f(x+1,y+1)-f(x+1,y)-f(x,y+1)+f(x,y)$.} in the $a_i$ that isolate the coefficients of any degree $2g+2$ polynomial in the $a_i$.  We then apply these operators (which are defined for any function of the $a_i$) to equation \eqref{eq:thetaRHS} and proceed as in the proof of Lemma \ref{le:psig+1}.
\end{remark}

\subsection{Proof of Theorem~\ref{thm:Fabervanishing} from the $\Theta$-relations}
In this subsection, we prove Theorem~\ref{thm:mainvanishing}; that is, we deduce Theorem~\ref{thm:Fabervanishing} from the $\Theta$-relations. This also serves as the key technical step in our proof of Theorem~$\star$.

\begin{proof}[{Proof of Theorem~\ref{thm:mainvanishing}}]
Recall that Theorem~\ref{thm:Fabervanishing} states that $R^k(\M_{g,n})$ vanishes in degrees $k \geq g$ if $n > 0$ and in degrees $k \geq g-1$ if $n=0$.  The proof of this statement is an elaboration of the technique of Lemma~\ref{le:psig+1}, which constitutes the special case in which we restrict attention to tautological classes involving only $\psi$-classes, assume that $n \geq 2g+3$, and, most importantly, prove vanishing starting in degree $g+1$ rather than $g$.  In order to obtain relations in degree $g$, we consider the $\Theta$-relations on $\M_{g,2g+3}$, multiply by appropriate classes, and push forward under a forgetful map to $\M_{g,n}$.  We must ensure, however, that the forgetful map is proper, and to this end, we must pass to the moduli space of curves with rational tails; this necessitates keeping track of those boundary terms in~\eqref{eq:Theta} that map onto the open part of the moduli space under the forgetful map.

Assume, first, that $n \leq g$; in particular, this implies that $g>0$.  We work first on the moduli space $\M_{g,2g+3}^{ct}$, so for the time being, $n$ is merely an auxiliary label.  Choose non-negative integers $c_1, \ldots, c_{2g+3}$ such that $c_i \geq 1$ for each $n+1 \leq i \leq 2g+2$, and denote
\[\Psi_C := \psi_1^{c_1} \cdots \psi_{2g+3}^{c_{2g+3}}.\]
Let $c := \sum_{i=1}^{2g+3} c_i$.

We multiply the $\Theta$-relation \eqref{eq:Theta} by $\Psi_C$ and pull back under the inclusion $\iota_{rt}\colon \Mggrt\to \M_{g,2g+3}^{ct}$.  After substituting  $a_{2g+3}=-a_1-\ldots -a_{2g+2}$ as before, we obtain:
\begin{multline}
  \label{eq:psi2g+3}
  \iota_{rt}^*[2s_A^*\Th]^{g+1}\Psi_C=\left[\sum_{i=1}^{2g+2}a_i^2\psi_i+
  \left(\sum_{i=1}^{2g+2}a_i\right)^2\psi_{2g+3}+
  \sum_{I\subset[2g+2],\# I\geq 2}a_I^2\de_0^I\right. \\
  \left.+\sum_{I\subset[2g+2],\# I\geq 1}a_{I^c}^2\de_0^{I\cup\{2g+3\}}\right]^{g+1}\Psi_C=0\in CH^{c+g+1}(\Mggrt).
\end{multline}
Here, we denote $a_{I^c}:=\sum_{i\in [2g+2]\setminus I}a_i$, and we have explicitly separated out the boundary divisors parameterizing curves with the last marked point on the rational component.  As in Lemma \ref{le:psig+1}, this relation is a homogeneous polynomial of degree $2g+2$ in the variables $a_i$, and every monomial in the $a_i$ gives a separate relation in the tautological ring, which is a polynomial of degree $c+g+1$ in the $\psi$-classes and boundary divisors on $\Mggrt$.

We now make the following important observation.

\begin{claim}\label{claim1} The coefficient in~\eqref{eq:psi2g+3} of any $a$-monomial  that is a multiple of $a_1\cdots a_n$ is the sum of a polynomial in only the $\psi$-classes and a collection of terms supported on the boundary, and each boundary term that occurs is either a multiple of a divisor $\delta_0^I$ having $\#(I\cap[n])\geq 2$ or of a divisor $\de_0^{I\cup\{2g+3\}}$ having $\#(I\cap[n])\geq 2$.
\end{claim}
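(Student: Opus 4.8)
The plan is to expand the bracketed divisor class in \eqref{eq:psi2g+3} and then track, block by block, which terms can contribute to the coefficient of an $a$-monomial divisible by $a_1\cdots a_n$. Writing the bracket as a sum of four kinds of building blocks --- the two ``interior'' classes $a_i^2\psi_i$ (for $i\in[2g+2]$) and $(\sum_{i=1}^{2g+2}a_i)^2\psi_{2g+3}$, and the two boundary classes $a_I^2\delta_0^I$ and $a_{I^c}^2\delta_0^{I\cup\{2g+3\}}$ --- every term in $[\,\cdot\,]^{g+1}$ is a product of $g+1$ such blocks, carrying a class that is a monomial in $\psi$-classes and divisors $\delta_0^K$, weighted by a degree-$2g+2$ polynomial in the $a_i$. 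The terms in which all $g+1$ blocks are interior contribute pure $\psi$-monomials, and these, after multiplication by $\Psi_C$, account for the ``polynomial in only the $\psi$-classes'' summand of the claim. It remains to analyze the terms carrying at least one boundary divisor and to show that, in the coefficient of a multiple of $a_1\cdots a_n$, every surviving such term is divisible by at least one $\delta_0^K$ with $\#(K\cap[n])\geq 2$.

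The main tool is Lemma~\ref{le:vanish}, used together with the fact that $\Psi_C$ contains $\psi_i$ to a positive power for every $i\in\{n+1,\ldots,2g+2\}$. I would argue by contraposition: assuming a term carries a boundary block whose divisor $\delta_0^K$ satisfies $\#(K\cap[n])\leq 1$, I show it cannot survive. For a block of type $\delta_0^I$ with $I\subset[2g+2]$ this is immediate, since $\Psi_C$ then supplies at least $\#I-\#(I\cap[n])\geq \#I-1$ distinct $\psi$-factors indexed by $I$, so $\delta_0^I\Psi_C=0$ by \eqref{eq:vanish}; the same count disposes of a block $\delta_0^{I\cup\{2g+3\}}$ whenever $\#(I\cap[n])=0$, and also whenever $\#(I\cap[n])=1$ and $c_{2g+3}\geq 1$. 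The one remaining ``bad'' possibility is a block $\delta_0^{I\cup\{2g+3\}}$ with $\#(I\cap[n])=1$ and $c_{2g+3}=0$, where $\Psi_C$ alone falls exactly one $\psi$-factor short of the threshold in Lemma~\ref{le:vanish}.

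This last case is where the hypothesis on the $a$-monomial enters, and I expect it to be the main obstacle. The coefficient of such a block is $a_{I^c}^2$, which does not involve $a_{j_0}$ for the unique $j_0\in I\cap[n]$; hence, for the total monomial to be divisible by $a_1\cdots a_n$, the variable $a_{j_0}$ must be produced by one of the other $g$ blocks. If that block is interior, it contributes either $\psi_{j_0}$ or $\psi_{2g+3}$, in either case a $\psi$-factor indexed by an element of $I\cup\{2g+3\}$, which supplies precisely the missing factor and forces the term to vanish by \eqref{eq:vanish}. The delicate subcase is when $a_{j_0}$ is instead supplied by another bad boundary block: since both divisors then contain $2g+3$, nonvanishing of their product forces the underlying subsets to be nested, and one is led to a chain of bad type-(d) blocks. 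I would resolve this by following the chain inward --- each bad block passes the obligation of producing its missing interior variable to a strictly smaller one --- so that the chain must terminate at an interior block, which then contributes the extra $\psi$-factor needed to annihilate the innermost divisor via a stratum-restricted application of Lemma~\ref{le:vanish}. Making this termination argument precise, and verifying the dimension count for the $\psi$-classes restricted to the nested boundary stratum, is the technical heart of the proof.
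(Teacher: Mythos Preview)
Your approach is essentially the paper's, and your first two reductions---killing $\delta_0^I$ with $\#(I\cap[n])\le 1$ and $\delta_0^{I\cup\{2g+3\}}$ with $I\cap[n]=\emptyset$ via Lemma~\ref{le:vanish} and $\Psi_C$, then tracking which block supplies the missing $a_{j_0}$---match the paper exactly. Two points deserve correction.

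First, your contraposition is mis-stated. The claim does not assert that any term containing a bad divisor vanishes; it asserts that any surviving boundary term contains at least one \emph{good} divisor. So the case you should be analyzing is a term all of whose boundary blocks are bad (plus interior blocks). Your case split ``interior vs.\ bad'' already tacitly assumes this, but you never say what happens when the block supplying $a_{j_0}$ is a \emph{good} boundary divisor---and there the answer is simply that the term is then a multiple of that good divisor, which is exactly what the claim requires.

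Second, and more substantively, the ``delicate subcase'' you flag does not lead to a chain at all; it terminates in one step. If the supplying block is another bad divisor $\delta_0^{J\cup\{2g+3\}}$ with $J\cap[n]=\{j'\}$, then the requirement that its coefficient $a_{J^c}^2$ involve $a_{j_0}$ forces $j_0\notin J$, hence $j'\neq j_0$. But then $j_0\in I\setminus J$ and $j'\in J\setminus I$ (since $I\cap[n]=\{j_0\}$), so neither of $I\cup\{2g+3\}$, $J\cup\{2g+3\}$ contains the other. As both contain $2g+3$, the intersection $\Delta_0^{I\cup\{2g+3\}}\cap\Delta_0^{J\cup\{2g+3\}}$ is empty and the product vanishes. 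This is precisely the paper's argument: there is no nested chain of bad blocks to follow, and the ``stratum-restricted'' dimension count you anticipate as the technical heart is never needed. (The side remark on $c_{2g+3}$ is likewise unnecessary, since the extra $\psi$-factor is supplied by the other block rather than by $\Psi_C$.)
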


\begin{proof}[{Proof of Claim~\ref{claim1}.}] First, we note that, according to Lemma \ref{le:vanish}, for any $I\subset[2g+2]$ we have
\begin{equation}
  \de_0^I\Psi_C=0\mbox{ if }\#(I\cap [n])\leq 1,
  \label{eq:bvanish1}
\end{equation}
\begin{equation}
  \de_0^{I\cup\{2g+3\}}\Psi_C=0\mbox{ if }I\cap [n]=\emptyset.
  \label{eq:bvanish2}
\end{equation}
Therefore, the only boundary divisors that appear in \eqref{eq:psi2g+3} are $\de_0^I$ with $\#(I\cap [n])\geq 2$ and $\de_0^{I\cup\{2g+3\}}$ with $\#(I\cap [n])\geq 1$.

Now, let $I\subset[2g+2]$ be such that $I\cap [n]=\{i\}$.  Suppose that $\de_0^{I\cup\{2g+3\}}$ appears in~\eqref{eq:psi2g+3} in an $a$-monomial that is a multiple of $a_i$.  Then, since $a_{I^c}^2$ does not involve $a_i$, this can only occur if $\de_0^{I \cup \{2g+3\}}$ is multiplied by one of the classes $\psi_i$, $\psi_{2g+3}$, $\delta_0^J$ for $i\in J$, or $\delta_0^{J\cup \{2g+3\}}$ for $i\notin J$.  In the first two cases, the product is zero by Lemma~\ref{le:vanish}:
\begin{equation*}
  \de_0^{I\cup \{2g+3\}}\psi_i\Psi_C=\de_0^{I\cup \{2g+3\}}\psi_{2g+3}\Psi_C=0.
\end{equation*}
In the third case, the product is zero whenever $\#(J \cap [n])=1$, by the previous paragraph.  The product is also zero in the fourth case whenever $\# (J \cap [n])=1$, because
\begin{equation*}
  \de_0^{I\cup \{2g+3\}} \de_0^{J\cup \{2g+3\}}=0
\end{equation*}
represents a geometrically empty intersection.

Now, consider the coefficient of any $a$-monomial in \eqref{eq:psi2g+3} that is a multiple of $a_i$. This coefficient consists of a polynomial only in the $\psi$-classes, as well as an expression supported on the boundary. The previous paragraph shows that any term in the boundary part containing $\de_0^{I \cup \{2g+2\}}$ with $I\cap [n]=\{i\}$ also contains at least one boundary divisor $\de_0^J$ or $\de_0^{J \cup \{2g+3\}}$ with $\#(J \cap [n]) \geq 2$.  Hence, given an $a$-monomial in \eqref{eq:psi2g+3} that is a multiple of $a_1\cdots a_n$, applying the above reasoning for each $a_i$ proves the claim.
\end{proof}

The importance of Claim~\ref{claim1} is the following. Let $\Pi_n:=\pi_{n+1}\circ\cdots\circ\pi_{2g+3}$ denote the map forgetting the points $p_{n+1},\ldots,p_{2g+3}$:
\begin{equation*}
  \Pi_n\colon\Mggrt\to\Mgnrt.
\end{equation*}
If $(C,p_1,\ldots,p_{2g+3})$ lies on a boundary divisor $\De_0^I$ or $\De_0^{I\cup\{2g+3\}}$ having $\#(I\cap[n])\geq 2$, then it remains singular after forgetting the points $p_{n+1},\ldots,p_{2g+3}$ and stabilizing. In other words, the pushforward along $\Pi_n$ of any tautological class on $\Mggrt$ that is a multiple of any of these divisors is a boundary stratum on $\Mgnrt$. Hence, if we take the pushforward of \eqref{eq:psi2g+3} to $\Mgnrt$, and only consider relations that come from $a$-monomials that are multiples of $a_1\cdots a_n$, any terms involving boundary divisors on $\Mggrt$ vanish when restricted to $\Mgn$.

We can get rid of all $a$-monomials that are not multiples of $a_1 \cdots a_n$ by formally differentiating \eqref{eq:psi2g+3} with respect to these variables.  Thus, if $\iota_n\colon \M_{g,n} \rightarrow \Mgnrt$ denotes the inclusion, we have shown the following relation in $CH^{c-g-2+n}(\M_{g,n})$:
\begin{equation}
\frac{\partial^n}{\partial a_1\cdots\partial a_n}\iota_n^*\Pi_{n*}\left[\sum_{i=1}^{2g+2}a_i^2\psi_i+
  \left(\sum_{i=1}^{2g+2}a_i\right)^2\psi_{2g+3}\right]^{g+1}\Psi_C=0
\label{eq:psidiff}
\end{equation}
whenever $c_i \geq 1$ for $n+1 \leq i \leq 2g+2$.

This implies the vanishing of a collection of tautological classes on $\Mgn$:

\begin{claim}\label{claim2} Assume that $n\leq g$, and let $d_1,\ldots,d_{2g+3}$ be non-negative integers satisfying
\begin{itemize}
\item $\displaystyle d:=\sum_{i=1}^{2g+3} d_i\geq 3g+3-n,$
\item $\displaystyle d_i\geq 1\mbox{ for }i\geq n+1,$
\item $\displaystyle \#\,\big\{i\;\big|\;d_i=0\big\}\leq d_{2g+3}.$
\end{itemize}
Then
\begin{equation}\label{eq:pushpsi}
\iota_n^*\Pi_{n*}[\psi_1^{d_1}\cdots\psi_{2g+3}^{d_{2g+3}}]=0\in CH^{d+n-2g-3}(\Mgn).
\end{equation}

\end{claim}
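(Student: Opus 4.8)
The plan is to prove Claim~\ref{claim2} by a descending induction on the exponent $d_{2g+3}$, extending the Gaussian elimination of Lemma~\ref{le:psig+1} to the $\Psi_C$-twisted, pushed-forward setting. The starting point is relation~\eqref{eq:psidiff}, which I read as a polynomial identity in $a_1,\dots,a_{2g+2}$ whose every coefficient vanishes in $CH^*(\Mgn)$; equivalently, for each $a$-monomial $a^M=a_1^{m_1}\cdots a_{2g+2}^{m_{2g+2}}$ that is a multiple of $a_1\cdots a_n$ (so that Claim~\ref{claim1} applies and $\iota_n^*\Pi_{n*}$ annihilates the boundary part of its coefficient), the pushforward--restriction of the pure $\psi$-coefficient vanishes. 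Expanding $[\,\cdots\,]^{g+1}\Psi_C$, that $\psi$-coefficient is a positive combination of classes $\iota_n^*\Pi_{n*}[\psi_1^{c_1+e_1}\cdots\psi_{2g+3}^{c_{2g+3}+e_{2g+3}}]$ indexed by $e$ with $\sum_i e_i=g+1$ and $2e_i\le m_i$ for $i\le 2g+2$, the $e$-term carrying the factor $\binom{g+1}{e}$ times the coefficient of $a^M$ in $a_1^{2e_1}\cdots a_{2g+2}^{2e_{2g+2}}\bigl(\sum_p a_p\bigr)^{2e_{2g+3}}$. Each contributing $\psi$-monomial has $\psi_i$-exponent $c_i+e_i\ge c_i\ge 1$ for $n+1\le i\le 2g+2$ and $\psi_{2g+3}$-exponent $\ge d_{2g+3}\ge 1$ (by the positivity hypothesis, which includes $i=2g+3$), so Faber's formula~\eqref{eq:pushka} applies throughout.

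To isolate the target $\psi_1^{d_1}\cdots\psi_{2g+3}^{d_{2g+3}}$, I would write $d_i=c_i+e_i$ with $\sum_i e_i=g+1$ and $c_i\ge 1$ for $n+1\le i\le 2g+2$, and then choose $a^M$ exactly as in Lemma~\ref{le:psig+1}: set $m_i=2e_i$ for those $i\le 2g+2$ with $e_i>0$, distribute $m_i=1$ over a set of $2e_{2g+3}$ of the indices in $I_e=\{i\le 2g+2: e_i=0\}$, and set $m_i=0$ on the rest. As in the lemma, the inequality $2e_i\le m_i$ then forces any contributing $e'$ to satisfy $e'_i\le e_i$ for all $i\le 2g+2$, so that $e'_{2g+3}\ge e_{2g+3}$ with equality only for the target; thus the target appears with a positive coefficient and every other contributing term has strictly larger exponent $d'_{2g+3}=c_{2g+3}+e'_{2g+3}>d_{2g+3}$. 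The one new constraint, absent in Lemma~\ref{le:psig+1}, is that $a^M$ must be divisible by $a_1\cdots a_n$, forcing $m_i\ge 1$ for $i\le n$; I arrange this by taking $e_i=1$ (hence $m_i=2$) for those $i\le n$ with $d_i\ge 1$ and by requiring the $2e_{2g+3}$ slot-indices to include every $i\le n$ with $d_i=0$.

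The heart of the argument is that the three hypotheses of Claim~\ref{claim2} are preserved by the terms of strictly larger $\psi_{2g+3}$-exponent, so that the induction closes. The total-degree hypothesis is automatic, since $d'=\sum_i c_i+\sum_i e'_i=d$; the positivity hypothesis holds because $d'_i\ge c_i\ge 1$ for $n+1\le i\le 2g+2$ and $d'_{2g+3}>0$. For the zero-count hypothesis I would use the conservation identity
\begin{equation*}
 d'_{2g+3}-d_{2g+3}=e'_{2g+3}-e_{2g+3}=\sum_{i\le 2g+2}(e_i-e'_i).
\end{equation*}
Since no zero can occur at an index $i\ge n+1$, every old zero ($d_i=0$) persists and every new zero ($d_i>0$, $d'_i=0$, necessarily $i\le n$) forces $e_i>e'_i=0$, hence contributes at least $1$ to the right-hand side; so the number of new zeros is at most $d'_{2g+3}-d_{2g+3}$, and with the target bound $\#\{i:d_i=0\}\le d_{2g+3}$ this gives $\#\{i:d'_i=0\}\le d'_{2g+3}$. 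The descending induction is well-founded because any contributing term has $d'_{2g+3}\le d-(2g+2-n)$; at the maximal exponent no term of larger exponent contributes and the relation yields the vanishing directly.

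The step I expect to be the main obstacle is the combinatorial feasibility of the choice of $e$ and of the slot-indices: one must exhibit a decomposition $d_i=c_i+e_i$ with $\sum_i e_i=g+1$ respecting the caps $e_i\le d_i$ (for $i\le n$), $e_i\le d_i-1$ (for $n+1\le i\le 2g+2$, to keep $c_i\ge 1$) and $e_{2g+3}\le d_{2g+3}$, while simultaneously satisfying $\#\{i\le n: e_i=0\}\le 2e_{2g+3}\le\#I_e$. Here the total-degree hypothesis is precisely what makes the available capacity $\sum_i d_i-(2g+2-n)=d-2g-2+n$ at least $g+1$, and the zero-count hypothesis, via $\#\{i\le n: d_i=0\}\le d_{2g+3}$ together with $n\le g$, is precisely what supplies enough $\psi_{2g+3}$-slots; assembling these constraints into one explicit valid placement, though routine, is the most delicate bookkeeping in the proof.
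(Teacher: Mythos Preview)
Your approach is essentially the same as the paper's: both proceed by descending induction on the $\psi_{2g+3}$-exponent, using the coefficients of suitable $a$-monomials in relation~\eqref{eq:psidiff} and the monomial choice of Lemma~\ref{le:psig+1}. The paper packages the induction slightly differently---fixing $\Psi_C$ first and inducting over tuples $(k_1,\dots,k_{2g+3})$ satisfying a weaker auxiliary condition~\eqref{eq:claimcond}---but the mechanism is identical, and your verification that the three hypotheses of the Claim persist for the higher-exponent terms is a clean way to close the induction.

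There is, however, one concrete slip. Your prescription ``take $e_i=1$ for those $i\le n$ with $d_i\ge 1$'' is too rigid: it can make $\sum_i e_i=g+1$ infeasible. For instance, with $g=2$, $n=1$, and $(d_1,\dots,d_7)=(5,1,1,1,1,1,1)$ (which satisfies all three hypotheses), fixing $e_1=1$ forces $\sum_{i\ge 2}e_i=2$, but the caps $e_i\le d_i-1=0$ for $2\le i\le 6$ and $e_7\le d_7=1$ allow at most $1$. The fix is simply to allow $e_i\ge 1$ (up to $d_i$) for $i\in P$, exactly as the paper does when it sets $k'_i=1$ and then extends to any $k_i\ge k'_i$ summing to $g+1$; the total-capacity bound $d-(2g+2-n)\ge g+1$ then guarantees feasibility, and your conservation argument for the zero-count goes through unchanged since it only uses $e_i>0$ at a new zero. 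You correctly identify this placement step as the delicate bookkeeping; just be sure your explicit choice actually respects it.
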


\begin{proof}[{Proof of Claim~\ref{claim2}.}]
We first rewrite the $\psi$-monomial in question in notation similar to what appears in \eqref{eq:psi2g+3}.  Namely, we claim that we can write
\begin{equation*}
\psi_1^{d_1}\cdots\psi_{2g+3}^{d_{2g+3}}=\psi_1^{k_1}\cdots\psi_{2g+3}^{k_{2g+3}}\Psi_C,
\end{equation*}
where $\Psi_C$ is a multiple of $\psi_{n+1}\cdots\psi_{2g+2}$, and the integers $k_i$ satisfy
\begin{equation}\label{eq:claimcond}
\sum_{i=1}^{2g+3} k_i=g+1, \quad \min(1,\#\{i\;|\;1\leq i\leq n,\,k_i=0\})\leq k_{2g+3}.
\end{equation}
Indeed, set
\begin{equation*}
  k'_i=
  \begin{cases}
    1, & i\leq n\mbox{ and }d_i>0, \\
    0, & d_i=0, \\
    \min\left(1,\#\left\lbrace i\;\big|\;d_i=0\right\rbrace\right), & i=2g+3.
  \end{cases}
\end{equation*}
Then the integers $k_i'$ satisfy the conditions
\begin{equation*}
k_i'\leq d_i\mbox{ for }1\leq i\leq n,\quad k_i'\leq d_i-1\mbox{ for }n+1\leq i\leq 2g+2,
\end{equation*}
\begin{equation*}
 \min\left(1,\#\left\lbrace i\;|\;1\leq i\leq n,\,k'_i=0\right\rbrace\right)\leq k'_{2g+3}.
\end{equation*}
Now let $k_i$ be any collection of integers that satisfy the same inequalities as above, such that $k_i'\leq k_i$, and such that they add to $g+1$.

To prove the claim, it is therefore enough to show that
\begin{equation}\label{eq:pushpsi2}
\iota^*_n\Pi_{n*}[\psi_1^{k_1}\cdots\psi_{2g+3}^{k_{2g+3}}\Psi_C]=0 \in CH^{c+n-g-2}(\Mgn)
\end{equation}
for any collection of integers $k_1,\ldots,k_{2g+3}$ satisfying \eqref{eq:claimcond} and any $\psi$-monomial $\Psi_C$ of degree $c\geq 2g+2-n$ that is a multiple of $\psi_{n+1}\cdots\psi_{2g+2}$.

We prove \eqref{eq:pushpsi2} from the relations \eqref{eq:psidiff} by an argument essentially identical to the proof of Lemma \ref{le:psig+1}, via descending induction on $k_{2g+3}$.  First, we prove \eqref{eq:pushpsi2} for the case $(k_1, \ldots, k_{2g+3}) = (0, \ldots, 0, g+1)$, which satisfies \eqref{eq:claimcond} because $n \leq g$.  In this case we have
\begin{equation*}
\iota_n^*\Pi_{n*}[\psi_{2g+3}^{g+1}\Psi_C]=0,
\end{equation*}
since $(2g+2)!\iota_n^*\Pi_{n*}[\psi_{2g+3}^{g+1}\Psi_C]$ is the coefficient of $a_{n+1}\cdots a_{2g+2}$ in \eqref{eq:psidiff}.

Now let $1\leq k\leq g+1$, and assume that we have shown that equation \eqref{eq:pushpsi2} holds for any $(k_1,\ldots,k_{2g+3})$ satisfying condition~\eqref{eq:claimcond} and such that $k_{2g+3}\geq k$. Let $K=(k_1,\ldots,k_{2g+2})$ be a collection of integers such that $(k_1,\ldots,k_{2g+2},k-1)$ satisfies  \eqref{eq:claimcond}, and denote
\begin{equation*}
\Psi_K:=\psi_1^{k_1}\cdots\psi_{2g+2}^{k_{2g+2}}.
\end{equation*}
We need to show that $\iota^*_n \Pi_{n*}[\Psi_K \psi_{2g+3}^{k-1}\Psi_C]=0$.

Let $j$ be the $2(k-1)$-st element, in increasing numerical order, in the set
\[I_K:=\{i\; | \;k_i=0\}\subset\{1,\ldots,2g+2\}.\]
Note that by assumption \eqref{eq:claimcond}, we have $j\geq n$.  For $1 \leq i \leq 2g+2$, define $m_i$ by
\begin{equation*}
  m_i:=
  \begin{cases}
    2k_i, & i\notin I_K, \\
    1, & i\in I_K\mbox{ and }i\leq j, \\
    0, & i\in I_K\mbox{ and }i>j.
  \end{cases}
\end{equation*}
Since $k_1+\ldots+k_{2g+2}=g+1-(k-1)$, we see that $m_1+\ldots+m_{2g+2}=2g+2$. Furthermore, $m_i>0$ if $i\leq n$. Therefore, the monomial $a_1^{m_1-1}\cdots a_n^{m_n-1}a_{n+1}^{m_{n+1}}\cdots a_{2g+2}^{m_{2g+2}}$ occurs in equation \eqref{eq:psidiff}. A careful inspection shows that its coefficient is equal to
\begin{equation*}
C_K\iota^*_n \Pi_{n*}[\Psi_K \psi_{2g+3}^{k-1}\Psi_C]+\iota^*_n \Pi_{n*}[\mbox{a multiple of }\psi_{2g+3}^k\Psi_C]=0,
\end{equation*}
where $C_K$ is a nonzero coefficient, and each $\psi$-monomial that occurs in the second summand satisfies condition \eqref{eq:claimcond}. By induction, all these summands vanish, hence $\iota^*_n \Pi_{n*}[\Psi_K \psi_{2g+3}^{k-1}\Psi_C]=0$.
\end{proof}

In fact, relations \eqref{eq:pushpsi} imply the vanishing of all tautological classes of degree $g$ on $\Mgn$. To see this, recall that the tautological ring of $\M_g$ is generated by the $\ka$-classes, while the tautological ring of $\Mgn$ is generated by the $\ka$- and the $\psi$-classes.  We define the {\it length} of a monomial $\ka_{b_1}\cdots\ka_{b_l}$ in the $\ka$-classes to be $l$.

\begin{claim} Let $g \geq 1$.  Then any codimension-$k$ monomial in the $\ka$-classes vanishes on $\M_{g,1}$ for any $k\geq g$.\label{claim3}
\end{claim}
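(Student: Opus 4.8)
The plan is to prove Claim~\ref{claim3} by induction on the \emph{length} $l$ of the monomial, reducing a monomial of length $l$ to a combination of monomials of strictly smaller length but the same codimension. Since $\ka_0 = 2g-1\neq 0$ is a constant, any $\ka$-monomial equals a nonzero multiple of one all of whose indices are positive, so I may restrict attention to $\ka_{b_1}\cdots\ka_{b_l}$ with all $b_i\geq 1$ and $\sum b_i = k\geq g$.

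The engine of the induction is Claim~\ref{claim2}, applied with $n=1$ (permissible since $g\geq 1$ forces $n\leq g$). To target $\ka_{b_1}\cdots\ka_{b_l}$ I would feed in the exponents $d_1=0$, $d_{i+1}=b_i+1$ for $1\leq i\leq l$, and $d_j=1$ for $l+2\leq j\leq 2g+3$; this is available exactly when $l\leq 2g+2$, since the hypotheses then reduce to $\sum d_i = k+2g+2\geq 3g+2$ (equivalent to $k\geq g$), $d_i\geq 1$ for $i\geq 2$, and $\#\{i:d_i=0\}=1\leq d_{2g+3}$, all of which hold. Equation~\eqref{eq:pushpsi} gives $\iota_1^*\Pi_{1*}[\psi_2^{b_1+1}\cdots\psi_{l+1}^{b_l+1}\psi_{l+2}\cdots\psi_{2g+3}]=0$, and by the pushforward formula~\eqref{eq:pushka} (whose boundary corrections vanish upon restriction to the open locus $\M_{g,1}$) this class equals $R(b_1,\ldots,b_l,0,\ldots,0)$. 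By~\eqref{eq:observation} one has $R(b_1,\ldots,b_l,0,\ldots,0)=C\,\ka_{b_1}\cdots\ka_{b_l}+(\text{terms of length}<l)$, where every omitted term still has codimension $k$, and where $C$ collects precisely those permutations leaving each positive index alone in its cycle; since $\ka_0=2g-1>0$ each such contribution is positive, so $C>0$. Thus $\ka_{b_1}\cdots\ka_{b_l}$ is a combination of strictly shorter $\ka$-monomials of codimension $k\geq g$, which vanish by the inductive hypothesis; the base case $l=1$ is exactly the statement that $R(k,0,\ldots,0)$ is a nonzero multiple of $\ka_k$.

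This disposes of every monomial of length $l\leq 2g+2$. For a monomial $\ka_{b_1}\cdots\ka_{b_l}$ of length $l>2g+2$ I would factor out the sub-monomial $\ka_{b_1}\cdots\ka_{b_{2g+2}}$, of length exactly $2g+2$ and codimension $\sum_{i=1}^{2g+2}b_i\geq 2g+2>g$; it vanishes by the case just established, hence so does the whole product. I expect the main obstacle to be exactly this length bookkeeping: the relations~\eqref{eq:pushpsi} only involve the $2g+2$ forgettable points of $\M_{g,2g+3}$, so they cannot directly reach monomials of length exceeding $2g+2$, and one cannot circumvent this by pushing forward from $\M_{g,1+l}$ on the open moduli space, since that forgetful map is not proper. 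The factoring trick is what bridges the gap. A secondary point needing care is the verification that the leading coefficient $C$ is nonzero and that all remaining terms of $R$ have strictly smaller length, so that the induction is genuinely well-founded on length.
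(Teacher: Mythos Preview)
Your proof is correct and follows essentially the same strategy as the paper: induction on the length $l$, using Claim~\ref{claim2} with $n=1$ to produce the pushforward relation and then \eqref{eq:pushka}--\eqref{eq:observation} to extract $\ka_{b_1}\cdots\ka_{b_l}$ modulo shorter monomials. The only organizational difference is that you first complete the induction for all $l\le 2g+2$ and then handle $l>2g+2$ by factoring out a length-$(2g+2)$ submonomial, whereas the paper folds the $l>2g+2$ case into the induction by factoring out $\ka_{b_1}\cdots\ka_{b_{l-1}}$; both are valid. Your explicit verification of the hypotheses of Claim~\ref{claim2} and the positivity of the leading coefficient $C$ are welcome additions. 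One small remark: the parenthetical about ``boundary corrections'' to \eqref{eq:pushka} is unnecessary, since that formula already holds exactly on $\oM_{g,m}$ with no boundary terms, and restricts cleanly to $\M_{g,1}$.
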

\begin{proof}[{Proof of Claim~\ref{claim3}}]
The claim is proved by induction on the length.  The only length one monomials are $\ka_k$, and using \eqref{eq:pushpsi} and \eqref{eq:pushka}, we see that
\begin{equation*}
  \iota_1^*\Pi_{1*}[\psi_2^{k+1}\psi_3\cdots\psi_{2g+2}\psi_{2g+3}]=C\cdot \ka_{k}=0\mbox{ on }\M_{g,1},
\end{equation*}
where $C$ is a nonzero constant. Hence, $\ka_k$ vanishes on $\M_{g,1}$.

Now, suppose that the claim has been proven for all $\ka$-monomials of length less than $l$.  Let $b_1,\ldots,b_{l}$ be positive integers. If $l>2g+2$, then the monomial $\ka_{b_1}\cdots \ka_{b_{l-1}}$ has degree at least $2g+2$ and length $l-1$, and thus vanishes by the induction assumption, hence $\ka_{b_1}\cdots \ka_{b_l}$ vanishes as well.
If $l\leq 2g+2$, then using \eqref{eq:pushpsi}, \eqref{eq:pushka}, and \eqref{eq:observation}, we see that
\[\iota_1^*\Pi_{1*}[\psi_2^{b_1+1}\cdots\psi_{l+1}^{b_{l}+1}\psi_{l+2}\cdots\psi_{2g+3}]=C\cdot\ka_{b_1}\cdots \ka_{b_l}  +(\ka\mbox{-monomials of length less than } l)=0\mbox{ on }\M_{g,1},\]
where $C$ is a nonzero constant. Hence, $\ka_{b_1}\cdots \ka_{b_l}$ vanishes on $\M_{g,1}$ by induction.
\end{proof}

\begin{claim}
Any codimension-$k$ monomial in the $\psi$- and $\ka$-classes that is a multiple of $\psi_1\cdots\psi_n$ vanishes on $\Mgn$ for any $k\geq g$.
\label{claim4}
\end{claim}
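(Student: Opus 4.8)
The plan is to mirror the proof of Claim~\ref{claim3}, replacing the pure $\ka$-monomials there by monomials of the form
\[
M=\psi_1^{d_1}\cdots\psi_n^{d_n}\ka_{b_1}\cdots\ka_{b_l},\qquad d_i\geq 1,\ b_j\geq 1,\ \textstyle\sum_i d_i+\sum_j b_j=k\geq g,
\]
and to induct on the length $l$ of the $\ka$-part. (We remain in the case $n\leq g$, which is what makes the relations \eqref{eq:pushpsi} of Claim~\ref{claim2} available.) The engine, exactly as in Claim~\ref{claim3}, is Faber's pushforward formula \eqref{eq:pushka} for the forgetful map $\Pi_n\colon\Mggrt\to\Mgnrt$, which converts a pure $\psi$-monomial into a $\ka$-polynomial and thereby brings $M$ into contact with the $\psi$-vanishing relations of Claim~\ref{claim2}. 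The factor $\psi_1^{d_1}\cdots\psi_n^{d_n}$ will simply ride along as a spectator throughout.

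Concretely, suppose first that $l\leq 2g+3-n$, so there are enough forgotten points to carry all $l$ factors $\ka_{b_j}$. Applying \eqref{eq:pushka} and \eqref{eq:observation} to the $\psi$-monomial $\psi_1^{d_1}\cdots\psi_n^{d_n}\,\psi_{n+1}^{b_1+1}\cdots\psi_{n+l}^{b_l+1}\,\psi_{n+l+1}\cdots\psi_{2g+3}$ on $\Mgg$, in which the last $2g+3-n-l$ points each carry a single $\psi$, I would obtain
\[
\iota_n^*\Pi_{n*}[\,\cdot\,]=C\,M+\psi_1^{d_1}\cdots\psi_n^{d_n}\cdot(\ka\text{-monomials of length}<l),
\]
where $C$ is a nonzero constant (a positive power of $\ka_0=2g-2+n$ produced by the padding points). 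One checks that the chosen $\psi$-monomial satisfies the hypotheses of Claim~\ref{claim2}: every exponent beyond the $n$-th is at least $1$, none of the exponents vanishes, and the total degree is $k+2g+3-n\geq 3g+3-n$ since $k\geq g$. Hence the left-hand side vanishes by \eqref{eq:pushpsi}; meanwhile every term on the right other than $C\,M$ is again a multiple of $\psi_1\cdots\psi_n$ of codimension $k\geq g$ and of length strictly less than $l$, so it vanishes by the induction hypothesis. Since $C\neq 0$, this forces $M=0$. The base case $l=0$ is precisely this computation with $C=R(0,\ldots,0)\neq 0$.

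The step I expect to demand the most care is the \emph{overflow} regime $l>2g+3-n$, where there are more $\ka$-factors than points one is permitted to forget, so the construction above does not fit inside $\Mgg$ and Claim~\ref{claim2} cannot be invoked directly. The remedy is to peel off a single $\ka$: the truncated monomial $M'=\psi_1^{d_1}\cdots\psi_n^{d_n}\ka_{b_1}\cdots\ka_{b_{l-1}}$ is still a multiple of $\psi_1\cdots\psi_n$, has length $l-1$, and has codimension $k-b_l$. Using $b_l\leq k-n-(l-1)$ (which follows from $\sum_i d_i\geq n$ and $\sum_{j<l}b_j\geq l-1$) together with $l\geq 2g+4-n$, one gets $k-b_l\geq n+l-1\geq 2g+3>g$, so $M'$ meets the hypotheses of the claim and vanishes by the induction hypothesis; then $M=M'\cdot\ka_{b_l}=0$. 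Both cases reduce to monomials of strictly smaller $\ka$-length, so the induction is well founded and the claim follows. The only genuinely delicate bookkeeping is ensuring, in the overflow case, that dropping a $\ka$-factor never pushes the codimension below $g$ — which is exactly what the inequality above guarantees.
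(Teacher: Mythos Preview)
Your argument for $n\leq g$ is correct and follows essentially the same route as the paper: choose a $\psi$-monomial on $\Mgg$ that pushes forward via \eqref{eq:pushka} to $C\,M$ plus shorter-length $\ka$-terms, verify the hypotheses of Claim~\ref{claim2}, and induct on $l$. Your explicit treatment of the overflow case $l>2g+3-n$ is a nice piece of care; the paper's own proof of Claim~\ref{claim4} does not spell this out, though the analogous step appears in the proof of Claim~\ref{claim3}.

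However, there is a genuine gap: you never treat the case $n>g$, and the claim is stated for all $n$. Your parenthetical ``we remain in the case $n\leq g$'' correctly flags the restriction under which Claim~\ref{claim2} applies, but Claim~\ref{claim4} itself is invoked for arbitrary $n$ in the subsequent induction of the main theorem (for instance, the base case $D=n+1$ in the final paragraph of the proof of Theorem~\ref{thm:mainvanishing} may well have $n+1>g$). The paper handles $n>g$ separately and quite simply: once the case $n=g$ gives $\psi_1\cdots\psi_g=0$ on $\M_{g,g}$, the pullback formula \eqref{eq:pullback} shows $\psi_1\cdots\psi_g=0$ on $\Mgn$ for every $n\geq g$, and therefore any multiple of $\psi_1\cdots\psi_n$ vanishes. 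You should add this short paragraph (and dispose of $g=0$ trivially) to complete the argument.
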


\begin{proof}[{Proof of Claim~\ref{claim4}}] First, assume that $n\leq g$.  The monomials in question have the form
\[\psi_1^{d_1} \cdots \psi_n^{d_n} \kappa_{b_1}\cdots \kappa_{b_l},\]
where $b_i$ and $d_i$ are positive integers. We denote $m:=b_1 + \ldots + b_l$ so that  $d_1 + \ldots + d_n = k-m$.  With $m$ fixed, we proceed by induction on the length $l$ of the $\kappa$-monomial.

The only degree-$m$ monomial in the $\kappa$ classes of length one is $\ka_m$, and using \eqref{eq:pushpsi} and \eqref{eq:pushka}, we see that
\begin{equation*}
 \iota_n^* \Pi_{n*}[\psi_1^{d_1}\cdots\psi_n^{d_n}\psi_{n+1}\cdots\psi_{2g+2}\psi_{2g+3}^{m+1}]=C\cdot \psi_1^{d_1}\cdots\psi_n^{d_n}\ka_m=0\mbox{ on }\Mgn,
\end{equation*}
where $C$ is a nonzero constant, so $\psi_1^{d_1}\cdots\psi_n^{d_n}\ka_m$ vanishes on $\Mgn$. The above formula also holds when $m=0$ and $\ka_0=2g-2+n$, showing that every monomial in only the $\psi$-classes vanishes.

Now suppose that we have shown the claim for every monomial of codimension $m$ and length less than $l$. Then
\begin{multline*}
 \iota_n^* \Pi_{n*}[\psi_1^{d_1}\cdots\psi_n^{d_n}\psi_{n+1}^{b_1+1}\cdots\psi_{n+l}^{b_l+1}\psi_{n+l+1}\cdots\psi_{2g+3}] \\
  =C\psi_1^{d_1}\cdots\psi_n^{d_n}\cdot\left[\ka_{b_1}\cdots \ka_{b_l}+(\ka\mbox{-monomials of length less than }l)\right]=0
\end{multline*}
on $\Mgn$, where $C$ is a nonzero constant. Hence, $\psi_1^{d_1}\cdots\psi_n^{d_n}\ka_{b_1}\cdots \ka_{b_l}$ vanishes on $\Mgn$ by induction.

Finally, let $n>g$.  If $g=0$, then the claim is clearly true for dimension reasons.  For $n> g$, the above shows that $\psi_1\cdots\psi_g$ vanishes on $\M_{g,g}$, so by \eqref{eq:pullback} it vanishes on $\Mgn$.  It follows that $\psi_1\cdots\psi_n$ and any multiple of it vanishes as well.
\end{proof}

We are now ready to prove the main theorem: that any degree-$k$ monomial in the $\psi$- and $\kappa$-classes vanishes on $\M_{g,n}$ for $k \geq g$, and additionally, for $k = g-1$ and $n=0$.

For $n=1$, this is immediate from the above: if the monomial contains $\psi_1$, then it vanishes by Claim~\ref{claim4}, and if not, it vanishes by Claim~\ref{claim3}.

To prove the claim for $n=0$, note that the forgetful map $\pi\colon\M_{g,1}\to \Mg$ is proper. Since $\psi_1^{k}$ vanishes on $\M_{g,1}$ for $k>g$, we have
\begin{equation*}
\ka_k=\pi_*\psi_1^{k+1}=0\mbox{ on }\Mg\mbox{ for }k\geq g-1.
\end{equation*}
More generally, suppose that we have shown that any $\ka$-monomial of degree $k\geq g-1$ and length less than $l$ vanishes on $\Mg$. Let $b_1,\ldots,b_l$ be positive integers such that $b_1+\ldots+b_l=k\geq g-1$. By equation \eqref{eq:pullback}, we have
\begin{equation*}
\ka_{b_1}\cdots\ka_{b_l}\psi_1=(\pi^*\ka_{b_1}+\psi_1^{b_1})\cdots(\pi^*\ka_{b_l}+\psi_1^{b_l})\psi_1
\end{equation*}
on $\M_{g,1}$. The left-hand side has degree at least $g$, hence it vanishes on $\M_{g,1}$ by Claim~4.  Pushing forward the right-hand side under $\pi$, we get a multiple of $\ka_{b_1}\cdots\ka_{b_l}$ and a sum of $\ka$-monomials of lengths less than $l$. Hence, $\ka_{b_1}\cdots\ka_{b_l}$ vanishes on $\Mg$, proving the claim for $n=0$.

These cases provide the base for an induction on $n$, assuming $g>0$; for $g=0$, the obvious vanishing of the classes $\psi_i$ and $\kappa_i$ on $\M_{0,3}$ can be used as the base.  Now, suppose that we have proven the claim for $\Mgn$, and consider a monomial $\Xi=\ka_{b_1}\cdots\ka_{b_l}\psi_1^{d_1}\cdots\psi_{n+1}^{d_{n+1}}$ on $\M_{g,n+1}$ of degree
\begin{equation*}
b_1+\ldots+b_l+d_1+\ldots+d_{n+1}=k\geq g.
\end{equation*}
Proceed by descending induction on the number $D$ of positive $d_i$. If $D=n+1$, then all of the $d_i$ are positive, and $\Xi$ vanishes by Claim~\ref{claim4}. If not, then without loss of generality we can assume that $d_{n+1}=0$. Using equation \eqref{eq:pullback}, we see that
\begin{equation*}
\pi_{n+1}^*[\ka_{b_1}\cdots\ka_{b_l}\psi_1^{d_1}\cdots\psi_{n}^{d_{n}}]=
(\ka_{b_1}-\psi_{n+1}^{b_1})\cdots(\ka_{b_l}-\psi_{n+1}^{b_l})\psi_1^{d_1}\cdots\psi_{n}^{d_{n}}.
\end{equation*}
The left-hand side is the pullback of a class from $\Mgn$, which is zero by induction on $n$, and the expansion of the right-hand side is the sum of $\Xi$ and a collection of classes that vanish by induction on $D$. Hence, $\Xi$ vanishes on $\M_{g,n+1}$. This completes the proof of the theorem.
\end{proof}

\begin{remark}
\label{rem:formulas2}
We have shown that the $\Theta$-relations imply the vanishing of the tautological ring of $\Mgn$ in degrees $g-\delta_{0n}$ and above.  Just as in Remark~\ref{rem:formulas}, however, by upgrading the $\Theta$-relations to Pixton's double ramification cycle relations in the form \eqref{eq:thetaRHS}, and by keeping track of the boundary terms throughout the above computations, the same proof produces expressions for these classes as tautological classes supported on the boundary.
\end{remark}

Thus, we have the following corollary of the proof of Theorem~\ref{thm:mainvanishing}.

\begin{corollary}
Any polynomial of degree $k$ in the $\ka$- and $\psi$-classes on $\oMgn$ is equivalent to an algorithmically computable tautological class supported on the boundary of $\oMgn$, where $k\geq g$ or $n=0$ and $k=g-1$.  \label{cor:boundary}
\end{corollary}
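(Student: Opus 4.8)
The plan is to recognize the Corollary as the effective refinement of Theorem~\ref{thm:mainvanishing}, obtained by rerunning that proof with the $\Theta$-relations replaced by Pixton's double ramification cycle relations in the form~\eqref{eq:thetaRHS}. By linearity it suffices to treat a single monomial $\Xi$ in the $\ka$- and $\psi$-classes of degree $k\geq g-\delta_{0n}$. The proof of Theorem~\ref{thm:mainvanishing} exhibits the vanishing of $\Xi$ on $\Mgn$ as the outcome of a finite, explicit chain of operations applied to the input relation $[s_A^*\Th]^{g+1}=0$: extraction of the coefficient of a chosen $a$-monomial, multiplication by a $\psi$-monomial $\Psi_C$, pushforward along forgetful maps, and a cascade of inductions (descending on the power of $\psi_{2g+3}$ in Claim~\ref{claim2}, on $\ka$-length in Claims~\ref{claim3} and~\ref{claim4}, and on $n$ at the end). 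Each operation is algorithmic. The crucial point is that~\eqref{eq:thetaRHS} holds verbatim on the \emph{compact} space $\oMgn$, with the zero right-hand side of the $\Theta$-relation replaced by the tautological boundary class $D_{A,g+1}$ supported on $\De_{irr}$. Because $\oMgn$ is proper, all the forgetful maps used are proper, so I would run the entire argument directly on $\oMgn$ --- never restricting to the open stratum, and thereby avoiding the rational-tails device that was forced on us in the vanishing proof only because the $\Theta$-relations live on the noncompact $\Mgnct$. Running the same chain of operations now expresses $\Xi$ not as zero, but as the accumulated image of the boundary classes $D_{A,g+1}$ together with the genuinely boundary terms $\delta_h^P$ ($h\geq 1$) and controlled $\delta_0^I$ that appear when $[s_A^*\Th]^{g+1}$ is expanded via~\eqref{eq:Theta}.

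Concretely, I would first establish the base case on $\oM_{g,2g+3}$ exactly as in Remark~\ref{rem:formulas}: each $a$-monomial coefficient of~\eqref{eq:thetaRHS}, isolated by the finite-difference operators described there, equates a combination of degree-$(g+1)$ $\psi$-monomials with a tautological class supported on the boundary. I would then multiply by $\Psi_C$ and push forward along $\Pi_n$ as in the proof of Claim~\ref{claim2}: under~\eqref{eq:pushka} the pure $\psi$-monomial parts become the desired $\ka$- and $\psi$-polynomials up to corrections of shorter $\ka$-length, while the boundary parts push forward to boundary classes on $\oMgn$. Finally I would run the inductions of Claims~\ref{claim2}--\ref{claim4} and the concluding induction on $n$, at each stage solving for the target monomial in terms of previously computed boundary expressions plus the propagated image of $D_{A,g+1}$. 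Since every step is a linear operation with computable coefficients, the resulting boundary formula is algorithmically computable, which is precisely the content summarized in Remark~\ref{rem:formulas2}.

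The main obstacle is to guarantee that the boundary terms which were simply \emph{discarded} in the vanishing proof reassemble into a genuine boundary class after pushforward. This is delicate because pushforward along a forgetful map does not preserve support on the boundary in general: for instance $\pi_{n+1,*}[\delta_0^{\{i,n+1\}}]=[\oMgn]$ is the fundamental class. The resolution is exactly the combinatorial input of Claim~\ref{claim1}, whose hypothesis $\#(I\cap[n])\geq 2$ ensures that the relevant strata $\De_0^I$ and $\De_0^{I\cup\{2g+3\}}$ remain singular after forgetting $p_{n+1},\ldots,p_{2g+3}$, so that their pushforwards are again supported on the boundary; the $h\geq 1$ strata and the $\De_{irr}$-supported class $D_{A,g+1}$ cause no trouble, since stabilization after forgetting points never removes a separating node of positive genus nor a non-separating node. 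I would therefore have to verify that the divisibility-by-$a_1\cdots a_n$ condition, together with the vanishing consequences of Lemma~\ref{le:vanish}, persists through every multiplication and elimination in the algorithm, so that no boundary term ever pushes forward to a class meeting the open stratum. A secondary point, already flagged in Remark~\ref{rem:formulas}, is that the polynomiality of $D_{A,g+1}$ in the $a_i$ is only conjectural; this is circumvented by using the finite-difference operators, which are defined for arbitrary functions of the $a_i$ and preserve both tautologicality and effectivity.
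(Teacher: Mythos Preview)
Your proposal is correct and follows essentially the same approach as the paper: the Corollary is presented there as an immediate consequence of the proof of Theorem~\ref{thm:mainvanishing} via Remark~\ref{rem:formulas2}, namely by replacing the $\Theta$-relations with Pixton's relations~\eqref{eq:thetaRHS} on $\oMgn$ and tracking the boundary terms through the same chain of operations. Your elaboration of how Claim~\ref{claim1} guarantees that the discarded boundary terms push forward to genuine boundary classes, and your handling of the non-polynomiality issue via finite differences, match the paper's reasoning in Remarks~\ref{rem:formulas} and~\ref{rem:formulas2}.
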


\subsection{Proof of Theorem~$\star$ from Pixton's double ramification cycle relations}\label{proofstar}
We are now ready to give a constructive proof of Theorem~$\star$, thus completing the proof of Theorem~\ref{thm:mainthm}.

\begin{proof}[{Proof of Theorem~\ref{thm:mainthm}}]
Let $\Ga$ be a stable graph of genus $g$ with $n$ legs.  Let $\ga$ be a basic class on $\oM_{\Ga}$ of the form~\eqref{eq:gamma}, and let $\xi_{\Ga*}(\ga)$ be the corresponding tautological class on $\oMgn$, where $\xi_{\Ga}$ is the gluing map~\eqref{eq:xi}.

We apply Corollary~\ref{cor:boundary} to every $\ga_v$. If $d(\ga_v)\geq g(v)$, then we can express $\ga_v$ as a boundary class using Pixton's double ramification cycle relations and the algorithm of Theorem~\ref{thm:mainvanishing}.  (If $g(v)=0$ or $g(v)=1$, we can alternatively use the divisorial relations~\eqref{eq:M11} and \eqref{eq:TRR}.)

Therefore, possibly after replacing $\Gamma$ by a graph representing a deeper boundary stratum, we can assume that $d(\gamma_v) \leq g(v)-1$ if $g(v)>0$ and that $d(\gamma_v) = 0$ if $g(v)=0$.  Let $v_0$ denote the number of genus-zero vertices of $\Gamma$. Using~\eqref{eq:genus}, we see that
\begin{align*}
\deg \xi_{\Ga*}(\ga)&=\sum_{v\in V}d(\ga_v)+\# E\leq \sum_{v\in V,g(v)>0}(g(v)-1)+\# E\\
&=g(\Ga)-h^1(\Ga_0)+v_0-\# V+\# E=g(\Ga)+v_0-1.
\end{align*}
Hence, $v_0\geq \deg \xi_{\Ga*}(\ga)-g(\Ga)+1$, proving the theorem.
\end{proof}

\begin{remark} The proofs of Theorem~\ref{thm:mainvanishing} and Theorem~\ref{thm:mainthm} together provide an explicit algorithm for expressing any tautological class of codimension $k\geq g$ on $\oMgn$ as a boundary class having $k-g+1$ rational components. We provide an outline for this algorithm below to demonstrate its feasibility; we (or our students) plan to implement this algorithm and make it publicly available in the future.

In the notation of Section~\ref{sec:strata}, let $(\Ga,\ga)$ be a marked stable graph defining a tautological class $\xi_{\Ga*}(\ga)\in R^k(\oMgn)$ with a basic class $\ga_v$ at each vertex $v\in V$. We say that $\xi_{\Ga*}(\ga)$ has {\em property $\star$} if
\begin{equation*}
\deg (\ga_v) \leq \max (g(v)-1,0)\mbox{ for every }v\in V.
\end{equation*}
By the proof of Theorem~\ref{thm:mainthm}, this implies that $\Ga$ has at least $k-g+1$ rational components. Our goal is to obtain an expression for every tautological class in $R^*(\oMgn)$ as a linear combination of classes having property $\star$. We obtain such expressions by induction on the genus and the degree. Assume that we have already constructed such formulas for all classes in $R^*(\oM_{g',n})$ with $g'<g$ and all $n$, and for all classes in $R^{k'}(\oMgn)$ with $k'<k$ and all $n$.

For each $v\in V$ such that $g(v)<g$ and $\deg(\ga_v)\geq g(v)$, we use our database to express $\ga_v$ as a linear combination of classes having property $\star$. In addition, there may be at most one vertex $u$ such that $g(u)=g$ and $\deg (\ga_u)\geq g$. In this case, we use the algorithm of Theorem~\ref{thm:mainvanishing} to express $\ga_u$ as a linear combination of non-trivial boundary classes. The highest possible degree of a basic class on any such boundary class is $k-1$, and by induction, our database expresses all such classes as linear combinations of classes having property $\star$. Hence, we obtain such an expression for $\ga_u$ as well. Gluing together these formulas, we obtain an expression for $\xi_{\Ga*}(\ga)$ in terms of classes having property $\star$. In this way, we obtain an expression for any given tautological class in terms of classes having property $\star$ in a finite number of steps.
\end{remark}

\section{Example}
\label{examples}
In this section, we exemplify our methods by reproving the divisorial formulas~\eqref{eq:M11} expressing $\psi_1$ and $\ka_1$ in terms of the boundary divisor $\delta_{irr}$ on $\M_{1,1}$; note that this is not a circular argument, as these formulas were not used in the derivation of the main theorem.

Before we begin, we note that the genus-zero divisorial formulas~\eqref{eq:TRR} follow from the pullback formulas~\eqref{eq:pullback} and from the relation $\psi_1=0$ on $\oM_{0,3}$, which can be formally obtained from relation~\eqref{eq:Thetag+1} by substituting $a_3=-a_1-a_2$ and taking the coefficient of $a_1a_2$.

\subsection{Pixton's class}
\label{subsec:Pixton}

We first recall the definition of Pixton's class $\Omega_{g,A}$.

Define auxiliary classes $\Omega_{g, A}^r$ depending on
an additional integer parameter $r > 0$ as follows. Let $\Gamma=(V,H,g,p,\iota)$ be a stable graph of genus $g$ with $n$ legs (following the notation of Section~\ref{subsec:technical}), and let $A=(a_1,\ldots,a_n)\in \ZZ^n$. A \emph{weighting modulo
  $r$} on $\Gamma$ is a map
\begin{equation*}
  w\colon H \to \{0, \dotsc, r - 1\}
\end{equation*}
satisfying three properties:
\begin{enumerate}
\item For any $i \in \{1, \dotsc, n\}$ corresponding to a leg $\ell_i$ of $\Gamma$, we have $w(\ell_i) \equiv a_i \pmod{r}$.
\item For any edge $e\in E$ corresponding to two half-edges $h, h'\in H$, we
  have $w(h) + w(h') \equiv 0 \pmod{r}$.
\item For any vertex $v \in V$, we have
  $\sum_{h\in p^{-1}(v)} w(h) \equiv 0 \pmod{r}$.
\end{enumerate}
(Cf. the discussion of weights in Section \ref{subsec:Thetarelations}.)  Define $\Omega_{g, A}^r$ to be the class
\begin{equation}
  \label{eq:pixton}
  \sum_{\Gamma,w} \frac{1}{\#\Aut(\Gamma)} \frac 1{r^{h^1(\Gamma)}} \xi_{\Gamma*}\left(\prod_{i=1}^n e^{\frac 12 a_i^2\psi_i} \prod_{(h,h')\in E} \frac{1 - e^{-\frac 12 w(h)w(h')(\psi_h + \psi_{h'})}}{\psi_h + \psi_{h'}}\right),
\end{equation}
where the sum is over all isomorphism classes of stable graphs
$\Gamma$ together with a weighting $w$ modulo $r$.
Pixton has proven that the class $\Omega_{g, A}^r$ is a polynomial in
$r$ for $r \gg 0$ (see \cite[Appendix]{2016JandaPandharipandePixtonZvonkine}).
The class $\Omega_{g, A}$ is then defined as the constant term of this
polynomial in $r$.

All stable graphs $\Gamma$ corresponding to curves of compact type are
trees.
When $\Gamma$ is a tree, then there exists a unique weighting modulo
$r$, and when $r > \frac 12 |\sum_{i = 1}^n a_i|$, formula
\eqref{eq:pixton} is essentially obtained by expanding the formula for
$\exp([s_A^*\Theta])$ and performing repeated intersections of
divisors on $\Mgnct$.

\subsection{Computing $\kappa_1$}

Equipped with Pixton's formula for $\Omega_{g,A}$, we proceed with the computation of $\kappa_1$ by computing the coefficient of $a_1 a_2 a_3 a_4$ in
\begin{equation}
  \label{eq:ex}
  2\Pi_{1*} (\psi_2\psi_3\psi_4 [\Omega_{1, A}]_2) = 0,
\end{equation}
where the map $\Pi_1\colon \oM_{1, 5} \to \oM_{1, 1}$ forgets all but the
first marking.

Let us first consider the simpler question of computing $\kappa_1$ on
$\Mct_{1, 1}$.  In this case, we can replace $2[\Omega_{1, A}]_2$ by
$[s_A^*\Theta]^2$.  Since $\Mrt_{1, n} = \Mct_{1, n}$, we can use
\eqref{eq:psi2g+3} to compute $[s_A^*\Theta]^2$.

We claim that the coefficient of $a_1 a_2 a_3 a_4$ in
$\psi_2\psi_3\psi_4[s_A^*\Theta]^2$ is equal to
\begin{equation}
\label{eq:a1234}
  \frac 14 \cdot 24 \psi_2\psi_3\psi_4\psi_5^2.
\end{equation}
To see this, first notice that we can remove the summands
$a_i^2\psi_i$ for $i \in \{1, 2, 3, 4\}$ from \eqref{eq:psi2g+3},
since they will not give a multiple of $a_1 a_2 a_3 a_4$.  Next,
recall that multiplying with the class $\psi_2\psi_3\psi_4$ kills all
boundary divisor classes $\delta_0^I$ for
$I \subset \{1, \dotsc, 5\}$ except when $\{1, 5\} \subset I$.
Thus the coefficient of $a_1 a_2 a_3 a_4$ in $\psi_2\psi_3\psi_4[s_A^*\Theta]^2$ equals
the coefficient of $a_1 a_2 a_3 a_4$ in
\begin{equation*}
  \frac 14 \cdot \psi_2\psi_3\psi_4\left((a_1+a_2+a_3+a_4)^2\psi_5 - \sum_{I \subset \{1, 2, 3, 4\}, 1 \in I} \left(\sum_{i \notin I} a_i\right)^2 \delta_0^{I \cup \{5\}} \right)^2.
\end{equation*}
It remains to show that only the multiple of $\psi_5^2$
contributes.  This is true since, on the one hand, $\psi_2\psi_3\psi_4\psi_5$ kills any of the boundary
divisors by Lemma \ref{le:vanish}, and on the other hand, in the square of the boundary terms
the variable $a_1$ does not appear.  Since the coefficient of
$a_1 a_2 a_3 a_4$ in $(a_1 + a_2 + a_3 + a_4)^2$ is 24, we
obtain formula \eqref{eq:a1234}.

Thus, the coefficient of $a_1 a_2 a_3 a_4$ in
$2\Pi_{1*} (\psi_2\psi_3\psi_4 [\Omega_{1, A}]_2)$ is equal to
\begin{equation}\label{eq:kade}
   \frac 14 \cdot 24^2 \kappa_1 + C\delta_{irr}
\end{equation}
for a constant $C$ that we now need to determine.

To compute $C$, we only need to consider stable graphs $\Gamma$ with
$h^1(\Gamma) = 1$.  We claim that there is only one such dual graph with non-zero
coefficient of $a_1 a_2 a_3 a_4$ .

To see why this is the case, let us first look at the stable graph
$\Gamma$ with exactly one vertex and a loop $e = (h, h')$.  There
exist $r$ weightings modulo $r$ on $\Gamma$, which can be
distinguished by the value of $w(h) \in \{0, \dotsc, r-1\}$.  Since we
only need the degree-2 part of Pixton's class for \eqref{eq:ex}, and since the edge term in
\eqref{eq:pixton} for $\Gamma$ does not depend on the $a_i$, the
summand in \eqref{eq:pixton} for $\Gamma$ is a non-homogeneous
polynomial in the $a_i$ of degree 2.  Thus, it gives a zero
coefficient of $a_1 a_2 a_3 a_4$.

By similar arguments as in the compact-type case, the only remaining
stable graphs $\Gamma$ whose contribution will not be killed by
$\psi_2\psi_3\psi_4$ have two vertices $v_1, v_2$ connected by a pair of edges $e_1, e_2$ such that the leg $\ell_5$ associated to the fifth marked point lies on $v_1$ and the leg $\ell_1$ associated to the first marked point lies on $v_2$.  Let us write $e_i = (h_i, h'_i)$, where $h_i$ is the half-edge at
vertex $v_1$ and $h'_i$ is the half-edge at $v_2$.  There are again
$r$ choices of weightings modulo $r$, indexed by $w(h_1)$.  The locus
in $\oM_{1, 5}$ corresponding to $\Gamma$ is of codimension 2, and
therefore we need to take the constant term in the factors
corresponding to the legs in \eqref{eq:pixton}.  Notice that
\begin{equation*}
  w(h_1') \equiv -w(h_1), \qquad w(h_2) \equiv x - w(h_1), \qquad w(h_2') \equiv w(h_1) - x,
\end{equation*}
where
\begin{equation*}
  x = \sum_{i\text{ at }v_2} a_i,
\end{equation*}
and therefore the contribution of $\Gamma$ to $[\Omega_{1, A}]_{2}$
depends on the $a_i$ only in the quantity $x$.  Thus, we can only have a
non-zero coefficient of $a_1 a_2 a_3 a_4$ when
$\ell_1, \ell_2, \ell_3, \ell_4$ are on $v_2$; this is the unique
stable graph $\Gamma$ that contributes.

We now compute the contribution of $\Gamma$.  As we have seen,
it depends on the $a_i$ only in the form $x = a_1 + a_2 + a_3 + a_4$.
For the computation we can assume that $x$ is positive.  Let us also
write $a := w(h_i)$.  Since the edge term corresponding to $e_1$ in
\eqref{eq:pixton} vanishes when $a = 0$, we can assume that
$a \in \{1, \dotsc, r - 1\}$.  In the case that $a \le x$, we can
write
\begin{equation*}
  w(h_1) = a, \quad w(h'_1) = r-a, \quad w(h_2) = x-a, \quad w(h'_2) = r+a-x.
\end{equation*}
Otherwise,
\begin{equation*}
  w(h_1) = a, \quad w(h'_1) = r-a, \quad w(h_2) = r+x-a, \quad w(h'_2) = a-x.
\end{equation*}
By inclusion-exclusion, the contribution of $\Gamma$ is therefore given
by
\begin{multline*}
  \frac 14 \frac 1r \Bigg(\sum_{a = 1}^{r-1} a(r-a)(r+x-a)(a-x) \\
  + \sum_{a = 1}^x a(r-a)((x-a)(r+a-x) - (r+x-a)(a-x))\Bigg).
\end{multline*}
The first sum gives only a polynomial of degree 2 in $x$ and will
therefore not lead to a coefficient of $a_1 a_2 a_3 a_4$.  From the
remaining summand we obtain
\begin{equation*}
  \frac 14 \sum_{a = 1}^x a(r-a)(2(x-a)) \equiv -\frac 14 \cdot 2\sum_{a = 1}^x a^2(x-a) \pmod{r},
\end{equation*}
which, by computing the power sum, equals
\begin{equation*}
  \frac 14\left(-\frac 16 x^4 - \frac 23 x^3 + \frac 76 x^2 - \frac 13 x\right).
\end{equation*}
The total contribution of $\Gamma$ to the coefficient of
$a_1 a_2 a_3 a_4$ in \eqref{eq:ex} is therefore
\begin{equation*}
  -2 \cdot \frac 14 \cdot \frac{24}6 \frac 1{\#\Aut(\Gamma)} \Pi_{1*}(\psi_2\psi_3\psi_4 \xi_{\Ga*}[1]) = -\frac 14 \cdot 48 \delta_{irr}.
\end{equation*}
Plugging in this value of $C$ into equation~\eqref{eq:kade}, we conclude the well-known formula
\begin{equation*}
  \kappa_1 = \frac 1{12} \delta_{irr} \in CH^*(\oM_{1, 1}).
\end{equation*}

\subsection{Computing $\psi_1$}

For the computation of $\psi_1$ in terms of $\kappa_1$ and $\delta_{irr}$,
we now consider the coefficient of $a_1^2 a_2 a_3$ in \eqref{eq:ex}.

Similarly to the computation for $\kappa_1$, only the multiples of
$\psi_5^2$ and $\psi_1\psi_5$ give coefficients that are divisible by $a_1$ and are not killed by the multiplication
with $\psi_2\psi_3\psi_4$.  Hence, the
coefficient of $a_1^2 a_2 a_3$ in
$2\Pi_{1*} (\psi_2\psi_3\psi_4 [\Omega_{1, A}]_2)$ is equal to
\begin{equation*}
   0=\frac 14 \cdot 12 \cdot 24 \kappa_1 + \frac 14 \cdot 4 \cdot 24 \psi_1 + C'\delta_{irr}
\end{equation*}
for some constant $C'$.

Similarly to the computation for $\kappa_1$, the only
dual graphs $\Gamma$ that can contribute to $C'$ must have two vertices
$v_1, v_2$ connected by a pair of edges $e_1, e_2$ with $\ell_5$ on
$v_1$ and $\ell_1$ on $v_2$.  Let us write $e_i = (h_i, h'_i)$, where
$h_i$ is the half-edge at vertex $v_1$ and $h'_i$ is the half-edge at
$v_2$.  Unlike for $\kappa_1$, not only the dual graph $\Gamma$ for which $\ell_1, \ell_2, \ell_3, \ell_4$ are all on $v_2$ contributes
to $C'$, but also the dual graph $\Gamma'$ whose set of legs at $v_2$
is $\{\ell_1, \ell_2, \ell_3\}$.

The computation of the contribution of $\Gamma$ to $C'$ is essentially
the same as for $\kappa_1$.  The result is
\begin{equation*}
  -2\cdot \frac 14 \cdot \frac{12}6 \frac 1{\#\Aut(\Gamma)} \Pi_{1*}(\psi_2\psi_3\psi_4 \xi_{\Ga*}[1]) = -\frac 14 \cdot 24 \delta_{irr}.
\end{equation*}
The computation for $\Gamma'$ is also very similar.  The main
difference is that we should define $x := a_1 + a_2 + a_3$.  We find
that the contribution of $\Gamma'$ to $C'$ is
\begin{equation*}
  -2 \cdot \frac 14 \cdot \frac{12}6 \frac 1{\#\Aut(\Gamma')} \Pi_{1*}(\psi_2\psi_3\psi_4 \xi_{\Ga'*}[1]) = -\frac 14 \cdot 8 \delta_{irr}.
\end{equation*}

Combining the contributions, we obtain the relation
\begin{equation*}
  9 \kappa_1 + 3 \psi_1 = \delta_{irr} \in CH^*(\oM_{1, 1}),
\end{equation*}
and thus
\begin{equation*}
  \kappa_1 = \psi_1 = \frac 1{12} \delta_{irr} \in CH^*(\oM_{1, 1}).
\end{equation*}

\section*{Acknowledgments}

The authors would like to thank Aaron Pixton and Ravi Vakil for useful discussions, and Dimitri Zvonkine for pointing out a gap in an earlier version of the argument deducing the vanishing result for the tautological ring of $\Mgn$ from the theta relations. Our collaboration started during a visit by the first and third authors to Columbia University, where the second author was visiting on sabbatical. The authors would like to thank Columbia University for the hospitality.

\end{document}